\documentclass{article}

\usepackage{PRIMEarxiv}

\usepackage[english]{babel}
\usepackage{microtype}
\usepackage{booktabs}
\usepackage{etoolbox}

\usepackage[T1]{fontenc}

\usepackage{graphicx}
\usepackage{diagbox}
\usepackage{booktabs}
\usepackage{amsmath}
\usepackage{amssymb}
\usepackage{amsthm}
\usepackage{subcaption}
\usepackage{hyperref}
\usepackage{siunitx}
\usepackage{algorithm}
\usepackage{algpseudocode}

\newcommand{\R}{\mathbb{R}}

\newcommand{\K}{\mathcal{K}}

\newcommand{\pe}{\mathrm{pe}}
\newcommand{\PE}{\mathrm{PE}}
\newcommand{\PET}{\mathrm{PET}}

 \newtheorem{theorem}{Theorem}

 \newtheorem{prop}{Proposition}
 \newtheorem{defn}{Definition}
 \newtheorem{rem}{Remark}

\usepackage{xcolor}

\fancypagestyle{firstpage}{
  \fancyhf{} % limpia todo
}

\title{Persistent Entropy Transform: An entropy-based descriptor for topological data analysis
}

\author{ \href{https://orcid.org/0009-0006-1316-9026}{\includegraphics[scale=0.06]{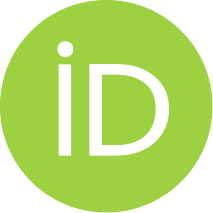}}Victor Toscano-Duran\thanks{Corresponding author.}, \href{https://orcid.org/0000-0001-9937-0033}{\includegraphics[scale=0.06]{orcid.pdf}}Rocio Gonzalez-Diaz \\
  Department of Applied Mathematics I, University of Seville \\
  Seville, Spain \\
  \texttt{\{vtoscano, rogodi\}us.es} \\
   \And
  \href{https://orcid.org/0000-0002-3624-6139}{\includegraphics[scale=0.06]{orcid.pdf}}Miguel A. Gutiérrez-Naranjo \\
  Department of Computer Science and Artificial Intelligence, University of Seville \\ Seville, Spain \\
  \texttt{magutier@us.es} \\
}

\begin{document}
\maketitle
\thispagestyle{firstpage}

\begin{abstract} 

Persistent entropy provides a compact 
%and stable information-theoretic 
summary of persistence diagrams, but 
%fundamentally 
discards 
%directional and 
geometric information inherent to the data.
%filtration process. 
This limitation creates a gap between scalar summaries, which are computationally efficient but geometrically coarse, and directional topological transforms, which are expressive but high-dimensional and computationally demanding. 
In this work, we introduce the \emph{Persistent Entropy Transform} (PET), a novel directional topological descriptor 
% Creo que si no se explican los uficiente, mejor quitar las fórmulas del abstract (M.A.)
%
%defined as the functional
%\[
%\mathrm{PET}_X : S^{d-1} \to \mathbb{R},
%\qquad
%v \mapsto PE\!\left(\mathrm{D}_k(f_v)\right),
%\]
%where $f_v(x)=\langle x,v\rangle$ is the directional height function and $PE$ denotes persistent entropy. 
which can be interpreted as an entropy-based compression of
directional topological transforms.

We establish basic theoretical properties of PET.
%under explicit finite-diagram and bounded-persistence assumptions. 
In particular, we prove translation invariance, scale invariance under positive uniform scalings, and orthogonal equivariance. 
%stability with respect to indexed perturbations of vertex positions on a fixed underlying complex. 
%The latter yields finite-direction approximation guarantees controlled by the covering radius of the sampled directions, providing a principled link between directional sampling density and discretization error.

Empirically, we use synthetic shapes to assess directional sensitivity, consistency with rotational equivariance, robustness under controlled perturbations, and
dependence on directional sampling density. In addition, we test the novel tool on
%use 
two real time-series benchmark datasets, the TwoLeadECG and the MIT-BIH, 
%in order 
to provide a proof of concept showing that PET embeddings can be used  succesfully as compact  feature vectors on real time-series benchmarks. 
These experiments support and validate PET as a compact and computationally tractable descriptor. 
%rather than to claim universal superiority over existing classification methods or fully expressive directional transforms.

\keywords{Topological Data Analysis \and Persistent Homology \and Persistent Entropy \and Directional Descriptor \and Signal Characterization}

\end{abstract}
\title{Persistent Entropy Transform}
\author{Victor Toscano-Duran}
\date{May 2026}

\maketitle

\section{Introduction}
Topological Data Analysis (TDA) provides mathematically rigorous tools for extracting
structural and geometric information from complex data
\cite{edelsbrunner2010computational,chazal2021introduction,
carlsson2020topological}. 
%Its central methodology, 
One of its main methodologies, persistent homology,
captures the evolution of topological features across scales and encodes them through
persistence diagrams and barcodes
\cite{edelsbrunner2002topological,ghrist2008barcodes,
edelsbrunner2008persistenthomology}. These descriptors enjoy strong theoretical
foundations, including stability under perturbations of the filtering function
\cite{cohen-steiner2007stability,Chazal2014Stability}
and structural stability results for persistence modules and geometric complexes
\cite{chazal2016structure}. As a consequence, persistent homology has become an
important component in modern machine learning, signal analysis, and geometric inference
pipelines
\cite{TopologicalDeepLearning2024,hofer2017DeepLearningTopologicalSignatures,
gabrielsson2020Topologylayerformachinelearning}.

\medskip

Despite these advantages, persistence diagrams are not directly compatible with
standard statistical and machine learning methods due to their variable cardinality and
non-Euclidean geometry. This limitation has motivated a large body of work on
vectorizations and functional summaries of persistence diagrams.
Persistence landscapes \cite{JMLR:v16:bubenik15a}
embed persistence information into Banach spaces of piecewise-linear functions,
while persistence images \cite{JMLR:v18:16-337}
provide stable finite-dimensional representations through kernelized density estimates.
Other approaches include algebraic embeddings
\cite{DBLP:conf/iciap/FabioF15},
kernel methods based on optimal transport
\cite{pmlr-v70-carriere17a},
and differentiable formulations for deep learning
\cite{10.1007/s10208-021-09522-y,
hofer2017DeepLearningTopologicalSignatures,
gabrielsson2020Topologylayerformachinelearning}.
While these methods successfully convert persistence diagrams into usable features,
they share an important limitation: they summarize topological information obtained from
a \emph{fixed filtration}. As a consequence, they only indirectly capture anisotropic
geometric structure and are inherently insensitive to directional variability.

\medskip

A complementary line of research addresses this limitation through
\emph{directional topological transforms}, which evaluate topological descriptors across
families of directional filtrations.
Let $d$ denote the ambient dimension. The Persistent Homology Transform (PHT)
\cite{Turner2014-ws} and the Euler Characteristic Transform (ECT)
\cite{ghrist2018euler,Curry2022-jl,Munch02012025}
associate a topological summary with each direction
$v\in S^{d-1}$, where $S^{d-1}$ denotes the unit sphere in $\mathbb{R}^d$, via the
height function $f_v(x)=\langle x,v\rangle$.
These transforms encode rich anisotropic geometric information and admit strong
injectivity guarantees under suitable assumptions
\cite{Turner2014-ws,Curry2022-jl}.
However, this expressiveness comes at a significant computational cost.
The output is a high-dimensional functional object indexed by $S^{d-1}$, typically
requiring substantial storage, discretization, and comparison procedures.
In practice, this limits scalability and complicates integration with standard
statistical pipelines.

\medskip

At the opposite extreme, scalar descriptors such as \emph{persistent entropy} (PE)
\cite{chintakunta2015entropybarcode}
provide extremely compact and computationally efficient summaries.
PE measures the dispersion of persistence intervals through a Shannon
entropy functional and has been successfully applied to noise discrimination, signal
analysis, and biological systems
\cite{rucco2017newtopologicalentropyformeasuringsimilarities,
rucco2016characterisationpersistententropy,
merelli2015topologicalTopologicalCharacterizationofComplexSystems,
atienza2019persistent}.
%Moreover, it enjoys rigorous stability guarantees \cite{atienza2020stabilitypersistententropy}.
Moreover, persistent entropy is continuous with respect to perturbations of
the input data \cite{atienza2020stabilitypersistententropy}. 
Yet this compactness comes at a fundamental price: PE collapses an entire
persistence diagram into a single scalar value, thereby discarding directional and
geometric information. In particular, geometrically distinct shapes may be
indistinguishable whenever their persistence-length distributions coincide.

\medskip

\noindent
\textbf{Our contribution: entropy-based compression of directional topology.}
This tension between expressiveness and compactness motivates the present work.
We identify a previously unexplored regime in topological descriptors:
\emph{directional entropy-based summaries} that retain anisotropic information while
remaining computationally and statistically tractable. In this work we introduce the \emph{Persistent Entropy Transform} (PET), defined for a
shape or signal $X \subset \mathbb{R}^d$ and a fixed homology degree $k$, with $0\leq k\leq d$ as
\[
\mathrm{PET}^{(k)}_X : S^{d-1} \rightarrow \mathbb{R},
\qquad
v \mapsto PE^{(k)}\!\left(\mathrm{D_k}(f_v)\right),
\]
where $f_v(x)=\langle x,v\rangle$ is the directional height function,
$\mathrm{D_k}(f_v)$ denotes the associated persistence diagram at homology degree $k$,
and $PE$ is the persistent entropy functional.

Conceptually, we refer to this construction simply as $\mathrm{PET}$ whenever the
%ambient dimension $d$ and homological 
homology degree $k$ and the shape $X$ are fixed or clear from context.
PET can be understood as an \emph{entropy-based compression of directional
topological transforms}.

The directional formulation of $\mathrm{PET}$ induces a structured functional
object on the sphere $S^{d-1}$, raising questions regarding rotational equivariance,
directional regularity, discretization error, and the trade-off between compression
and geometric expressiveness. Compared to scalar summaries, $\mathrm{PET}$ captures anisotropic structure and compared to PHT and ECT, it dramatically reduces representation size and computational cost, while retaining sufficient geometric information for many tasks. 

Besides, $\mathrm{PET}$ retains a single scalar per
direction, resulting, after discretization over $N$ points of $S^{d-1}$, in a vector representation in
$\mathbb{R}^N$ that is directly compatible with standard machine learning pipelines.
This makes PET particularly suitable for large-scale or time-constrained applications.
%where full directional transforms are impractical. 

Moreover, PET inherits the corresponding PE continuity property. This provides
a natural form of robustness to small perturbations of the input data.
%, without requiring or claiming a stability bound for the persistent entropy functional itself.

% Table~\ref{tab:transforms} summarizes these trade-offs.

% \begin{table}[ht]
% \centering
% \caption{Comparison of topological descriptors. $N$ denotes the number of sampled directions, $n$ the number of simplices, and $\omega$ the matrix multiplication exponent. Computational costs depend on the chosen persistence reduction algorithm and the structure of the underlying complex.}
% \label{tab:descriptor_comparison}

% \begin{tabular}{lllll}
% \toprule
% \textbf{Descriptor} & \textbf{Output} & \textbf{Stability} & \textbf{Injectivity} & \textbf{Complexity} \\
% \midrule
% PE  & Scalar $\in \mathbb{R}$ &
% Lipschitz &
% No &
% $\mathcal{O}(n\log n)$ \\

% ECT & Function $S^{d-1}\rightarrow\mathbb{Z}$ &
% Stable &
% Yes (finite directions) &
% $\mathcal{O}(Nn\log n)$ \\

% PHT & Function $S^{d-1}\rightarrow\mathrm{D}$ &
% Stable &
% Yes (under suitable assumptions) &
% $\mathcal{O}(Nn^{\omega})$ \\

% PET & Function $S^{d-1}\rightarrow\mathbb{R}$ &
% Lipschitz (inherited) &
% Open &
% $\mathcal{O}(Nn\log n)$ (practical) \\
% \bottomrule
% \end{tabular}
% \label{tab:transforms}
% \end{table}

% \medskip

Figure~\ref{fig:motivation} illustrates the motivation behind PET. As can be seen, the circle and the ellipse have the same PE value, whereas the ellipse and its rotated version have different PE values.
Nevertheless, the circle and the ellipse 
become
distinguishable through their directional entropy profiles, which encode anisotropy in
a compact form. 
%The figure shows the PET computed for a circle, an ellipse, and a rotated ellipse using directional lower-star filtrations sampled uniformly over $S^1$. As you can see, the circle and the ellipse has the same PE, but the ellipse and its rotated version doesn't. In contrast, 
Using PET, 
the circle produces an approximately constant PET, reflecting its rotational symmetry. In contrast, the ellipse exhibits a periodic PET profile associated with its anisotropic geometry. Rotating the ellipse results in a phase shift of the transform, illustrating the rotational equivariance of PET.

\begin{figure}[ht!]
    \centering
    \begin{subfigure}[b]{0.45\textwidth}
    \centering
    \includegraphics[width=\textwidth]{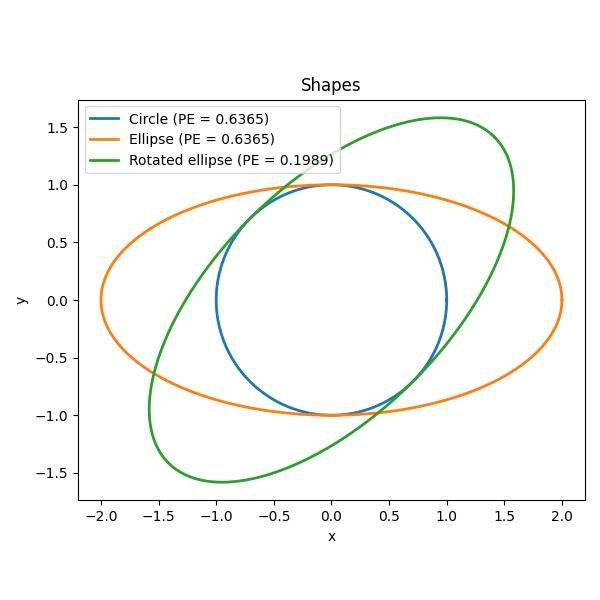}  
    \caption{}
    \label{fig:comparingPEsInitial}
    \end{subfigure}
    \begin{subfigure}[b]{0.45\textwidth}
    \centering
    \includegraphics[width=\textwidth]{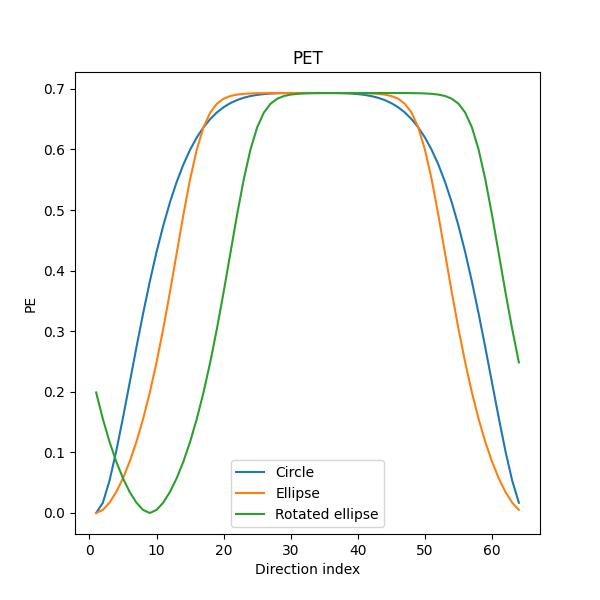}
    \caption{}
    \label{fig:comparingPEsAproxPE}
    \end{subfigure}
    \caption{(a) Three shapes: a circle, an ellipse, and a rotated ellipse together with the associated PE. (b) The PET associate to the three shapes of (a). The ellipse exhibits direction-dependent variation. The PET profile of the rotated ellipse is a reparameterized version of that of the original ellipse. The circle produces a different PET profile.} \label{fig:motivation}
\end{figure}

\paragraph{Contributions.} The main contributions of this work are as follows: 

\begin{itemize} 
\item \textbf{Persistent Entropy Transform (PET).} We introduce PET as a novel class of directional, entropy-based topological descriptors.
%, filling the gap between scalar summaries and high-dimensional directional transforms. 
\item \textbf{PET theoretical properties.} We establish translation invariance, rotational equivariance, scale invariance, and  continuity of PET with respect to small perturbations of the
input data.
%associated persistence diagrams by combining persistence diagram stability with continuity results from persistent entropy \cite{cohen-steiner2007stability,atienza2020stabilitypersistententropy}. 
\item \textbf{Directional discretization and regularity.} We analyze the approximation of PET under finite directional sampling and discuss the role of directional regularity in controlling discretization error.  
\item \textbf{Empirical validation.} We demonstrate that PET captures anisotropic geometric structure, remains robust under small perturbations, and provides compact yet discriminative representations. We also evaluate PET as a compact topological descriptor on time-series benchmarks. 
%The experiments are designed to validate PET as a compact topological descriptor, rather than to claim universal superiority over existing classification methods or fully expressive directional transforms. 
\end{itemize}

\medskip

The remainder of the paper is organized as follows. Section~\ref{sec:background} introduces the required background on lower-star
filtrations, persistent homology, persistent entropy, as well as existing topological descriptors and topological transforms.
% finite-diagram conventions, and directional topological transforms.
Section~\ref{sec:pet} defines PET and proves its basic geometric properties, including 
% Section 4 establishes stability, directional regularity,
% and finite-sampling approximation results under explicit assumptions. 
%Section~\ref{sec:relation} discusses 
the relation between PET,
PE, PHT, ECT, and standard vectorizations of persistence diagrams. 
Section~\ref{sec:experiments} presents the empirical
validation. Finally, conclusions and future research directions are discussed in Section~\ref{sec:conclusions}.

% The remainder of this paper is organized as follows. Section~\ref{sec:background} introduces the necessary background on simplicial complexes, filtrations, persistent homology, and persistent entropy. Section~\ref{sec:pet} presents the definition of the Persistent Entropy Transform and its theoretical properties. Section~\ref{sec:experiments} contains the experimental evaluation and illustrative examples. Finally, conclusions and future research directions are discussed in Section~\ref{sec:conclusions}.

\section{Background}\label{sec:background}

% explain topology background, persistent homology, perisstent entropy, filtration etc

%This section reviews the mathematical framework underlying the Persistent Entropy Transform (PET). 
In this section, we recall filtrations and directional height functions, persistent
homology and its stability properties, and the definition of PE.
The presentation focuses on the concepts required for the construction and analysis
of PET. For general introductions to TDA, we refer the
reader to
\cite{edelsbrunner2010computational,chazal2021introduction,
edelsbrunner2022computationaltopology}.

\subsection{Simplicial Complexes and Filtrations}

A simplicial complex provides a combinatorial representation of a topological space through simplices of different dimensions. Let $V=\{x_0,\dots,x_n\}\subset\mathbb{R}^d$ be a finite set of vertices. A $k$-simplex is the convex hull of $k+1$ affinely independent vertices of $V$. A simplicial complex $\mathcal{K}$ is a finite collection of simplices satisfying that every face of a simplex in $\mathcal{K}$ also belongs to $\mathcal{K}$, and the intersection of any two simplices is either empty or a common face.

In TDA, simplicial complexes are commonly equipped with filtrations. A filtration is a nested sequence of simplicial complexes
\[
\emptyset=\mathcal{K}_0\subseteq \mathcal{K}_1\subseteq\cdots\subseteq \mathcal{K}_m=\mathcal{K},
\]
encoding the evolution of topological structures across scales. In this work, we focus on lower-star filtrations induced by scalar functions defined on the vertices of a simplicial complex.

\subsection{Directional height functions and lower-star filtrations}

Let $\mathcal{K}$ be a simplicial complex with vertex set $V\subset\mathbb{R}^d$, and let
\[
f:V\to\mathbb{R}
\]
be a scalar function defined on its vertices. The lower-star filtration induced by $f$ is obtained by assigning to each simplex the maximum filtration value among its vertices. This generates a nested family of subcomplexes ordered according to increasing function values. 

In this work, filtrations are induced by directional height functions. Given a unit direction $
v\in S^{d-1}$,
the directional filtration function is defined as
\[
f_v(x)=\langle x,v\rangle,
\]
where $\langle\cdot,\cdot\rangle$ denotes the Euclidean inner product.
For each direction $v$, the function $f_v$ induces a lower-star filtration on $\mathcal{K}$. These directional filtrations provide geometric information about the underlying object from multiple orientations, and form the basis of several important topological transforms. 

The Persistent Homology Transform
(PHT) \cite{Turner2014-ws} assigns to each direction the collection of persistence diagrams associated with the corresponding filtration.
The Euler Characteristic Transform (ECT) \cite{ghrist2018euler, Roell24aDifferentiableECTforshapeclassification,Munch02012025} replaces persistence diagrams by Euler characteristic curves.
Both transforms encode anisotropic geometric information and admit injectivity results under suitable assumptions on
the underlying shape.

\subsection{Persistent Homology, Persistence Diagrams and Barcodes, and Diagram Stability}

Persistent homology is a tool for studying data geometry and connectivity across scales. Using this tool and given a \emph{filtered simplicial complex} (e.g., lower-star filtration), one can effectively compute $k$-dimensional topological features at different scales.
%, tracking features as they appear (birth) and disappear (death). 
Persistence diagrams and barcodes are visual representations used in the study of persistent homology.
%and represent the birth and death (end) of topological features across scales. 
%A $k$-dimensional persistence diagram plots points $(b, e)$ corresponding to the lifespans of $k$-dimensional topological features, while a barcode represents them as a {\it bar} starting at $b$ and ending at $e$. 
Specifically, a $k$-dimensional persistence diagram $D = \{(b_i, e_i) \mid i \in I\}$ satisfies that $b_i \leq e_i$ for all $i \in I$, and $I$ is the index set that identifies the pairs $(b_i,e_i)$ in $D$. Each pair $(b_i,e_i)$ corresponds to a $k$-dimensional topological feature that {\it appears} at time $b_i$ (birth) and {\it disappears} at time $e_i$ (death, i.e., end) as the filtration progresses. 
The associated barcode represents each pair $(b_i,e_i)$ as a {\it bar} starting at $b_i$ and ending at $e_i$.
Both persistence diagrams and barcodes facilitate the understanding of feature persistence in data. Fig. \ref{fig:entropyCalculation} shows a persistence diagram (center) alongside its corresponding barcode (right), illustrating the persistence of topological features in the data.

It is well-known that the number of bars of the barcode associated with the lower-star filtration of a simplicial complex $\K$ is less than or equal to half of the number of vertices of the simplicial complex $\K$ plus 1. Given the lower-star filtration on a simplicial complex $\K$ with vertex set $V$, local critical points are always located at the vertices of $V$ and can be effectively computed from the {\it lower link} of each vertex $v$ of $V$, which consists of the simplices of $\K$ in the closed lower-star that do not belong to the lower-star of $v$. We call $v$ a local critical vertex of index $q$ if its lower link has the reduced homology of the $(q -1)$-sphere. Then, $q$-dimensional persistence diagrams pair critical vertices of index $q$ (when topological features are born) with critical vertices of index $q+1$ (when topological features die). A detailed description can be found in~\cite[Chapter VI]{edelsbrunner2008persistenthomology}.

\begin{rem}
In the 1-dimensional setting considered in our experiments, the number of 0-dimensional persistence intervals is controlled by the number of local minima of the signal since, under genericity assumptions, components are born at local minima and merge at local maxima. By convention, we truncate the death time of the oldest connected component at
$\max f$. Thus, the longest interval is always $[\min f,\max f)$. Figure~\ref{fig:entropyCalculation}
illustrates this construction for a simplicial complex consisting of vertices and
edges, with a function $f$ derived from a sine function. In this example, four
local minima give rise to four 0-dimensional persistence intervals, with the
corresponding component mergers occurring at local maxima. The same persistence
barcode can therefore be represented using seven vertices, corresponding to four
local minima and three local maxima.

\end{rem}

Persistent homology provides a multiscale summary of connected components ($k = 0$), loops ($k = 1$), voids ($k = 2$), and higher-dimensional topological features.

The algebraic foundations of persistent homology were established by Zomorodian and Carlsson~\cite{zomorodian2005computing}, who proved the structure theorem for persistence modules over a field. Efficient computational algorithms and software implementations are now standard within the TDA literature~\cite{Otter2017ComputationPersistentHomology}.

\paragraph{Stability theory.}
%A fundamental property of persistence diagrams is their stability under perturbations of the filtering function. 
Cohen-Steiner et al.~\cite{cohen-steiner2007stability} proved the bottleneck stability inequality

\begin{equation}
d_{\infty}\bigl(\mathrm{D}(h), \mathrm{D}(g)\bigr)
\leq
\|h-g\|_{\infty},
\label{eq:bottleneck_stability}
\end{equation}

for tame functions defined on a fixed simplicial complex.

Subsequent work generalized this result to persistence modules and geometric filtrations~\cite{Chazal2014Stability,chazal2016structure}, establishing stability with respect to interleaving and Gromov--Hausdorff distances. These results provide the theoretical basis for the robustness of persistent homology under geometric and sampling perturbations and constitute the main stability framework used later in the analysis of the persistent entropy.

The stability of persistence-based descriptors has enabled their integration into statistical inference and machine learning pipelines. Applications include topological signal analysis, representation learning, topological regularization, and differentiable architectures~\cite{hofer2017DeepLearningTopologicalSignatures,TopologicalDeepLearning2024,gabrielsson2020Topologylayerformachinelearning,MALYUGINA2023TologicalLossFunctionforImageDenoisisng,ZHANG2025PersistentHomologyCombinedWithMachineLearningforSocialNetworkActivityAnalysis}. 
%The resulting methodological ecosystem motivates the development of descriptors that are simultaneously stable, expressive, and computationally tractable.

\subsection{Existing topological descriptors}

A central challenge in TDA is converting persistence diagrams into representations
compatible with statistical and machine learning pipelines. Since persistence diagrams
are multisets with variable cardinality and non-Euclidean geometry, numerous methods
have been proposed to embed them into functional or vector spaces while preserving
stability and discriminative power.

\paragraph{Functional embeddings.}
Persistence landscapes~\cite{JMLR:v16:bubenik15a} embed persistence diagrams into a
Banach space of piecewise linear functions, enabling the direct use of probabilistic
and statistical tools---means, variances, hypothesis tests---with convergence guarantees
inherited from the Banach space structure. Persistence
images~\cite{JMLR:v18:16-337} provide a finite-dimensional grid-based representation
via kernel density estimation over the persistence plane, combining stability under
perturbations with compatibility with standard machine learning pipelines.

\paragraph{Kernel and algebraic methods.}
Sliced Wasserstein kernels~\cite{pmlr-v70-carriere17a} define similarity measures
between diagrams via optimal transport, while complex-vector
summaries~\cite{DBLP:conf/iciap/FabioF15} encode diagrams through algebraic
transforms of their birth--death coordinates. Both approaches balance expressive power
against computational tractability, but typically produce intermediate- to
high-dimensional representations.

\paragraph{Persistent Entropy}

Persistent entropy, introduced in~\cite{chintakunta2015entropybarcode}, provides an information-theoretic summary of persistence diagrams by quantifying the distribution of persistence interval lengths.

\begin{defn}[Persistent entropy]\label{de:pe}
    Given a persistence diagram
\[
D=\{(b_i,d_i)\mid i\in I\},
\]
let
\(
\ell_i=d_i-b_i
\)
denote the persistence length of the $i$-th interval, and define
\(
L=\sum_{i\in I}\ell_i.
\)
\\
The persistent entropy of $D$ is defined as
\[
PE(D)
=
-\sum_{i\in I} p_i\log(p_i),
\]
where
\(
p_i=\frac{\ell_i}{L},
\) and log denotes the natural logarithm.

\end{defn}

\begin{figure}[ht!]
     \centering
     \includegraphics[width=0.7\textwidth
     ]{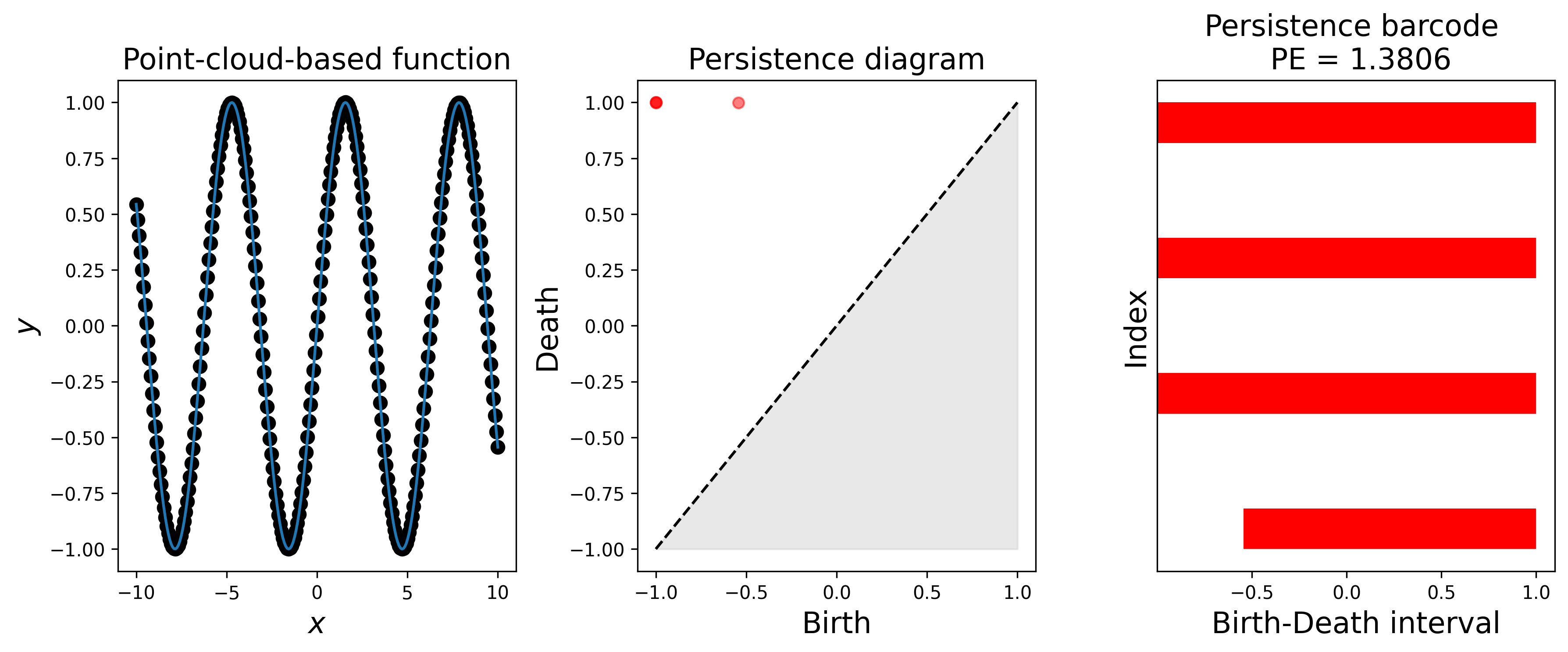}
    \caption{The figure shows a simplicial complex with 250 vertices (point-cloud-based function with 250 points) representing the sine function (left), its persistence diagram (middle), barcode, and persistent entropy (right), all from the lower-star filtration.} \label{fig:entropyCalculation}
\end{figure}

As an illustration, the persistent entropy computed in the example shown in Fig.~\ref{fig:entropyCalculation} (left) is 1.3806.

%Persistent entropy reaches its maximum value when all persistence intervals have equal length, in which case it is given by $\PE = \ln(\#I)$, where $\#I$ denotes the cardinality of $I$ (i.e., the number of points in the persistence diagram $D$), and decreases as the distribution becomes more concentrated around a small number of dominant intervals.

Persistent entropy is precisely the Shannon entropy of the normalized persistence length distribution. It attains its maximal value when all persistence intervals have equal length and decreases as the distribution becomes concentrated around a smaller number of dominant features. Consequently, persistent entropy provides a compact quantitative measure of the dispersion of topological persistence within the diagram.
Besides, under standard finiteness and boundedness assumptions, persistent entropy is stable with respect to small perturbations of the persistence diagram. This last property
%stability of persistence diagrams under perturbations of the input data 
provides robustness guarantees for entropy-based summaries of persistent homology~\cite{atienza2019persistent,atienza2020stabilitypersistententropy}.

% Atienza et al.~\cite{atienza2019persistent,atienza2020stabilitypersistententropy} proved that persistent entropy is continuous with respect to perturbations of the persistence diagram. More precisely, for persistence diagrams $D$ and $D'$,
% \begin{equation}
% \bigl|\mathrm{PE}(D) - \mathrm{PE}(D')\bigr|
% \leq
% C(D, D')\, d_{\infty}(D, D'),
% \end{equation}
% where the constant $C(D, D')$ depends on diagram cardinalities and total persistence. Combined with bottleneck stability~\cite{cohen-steiner2007stability}, this result provides explicit robustness guarantees for entropy-based summaries of persistent homology.

\paragraph{Persistent Entropy-based Summaries.}
Persistent entropy~\cite{chintakunta2015entropybarcode} occupies a distinctive
position in this landscape due to its minimal dimensionality, information-theoretic
interpretability, and $O(n \log n)$ computational cost. By mapping a persistence
diagram to a single scalar via the Shannon entropy of normalized persistence lengths
(Def.~\ref{de:pe}), it sacrifices representational richness in exchange for
compactness and ease of statistical manipulation. As established in
Section~\ref{sec:background} and exploited in Section~\ref{sec:pet}, this
compactness is accompanied by rigorous  stability
guarantees~\cite{atienza2020stabilitypersistententropy}.

\paragraph{Limitations of persistent entropy-based summaries.}
%Despite its compactness and stability, 
Persistent entropy compresses the entire persistence diagram into a single scalar value. As a consequence, the descriptor does not explicitly retain directional or anisotropic geometric information associated with the filtration. Different geometric structures may therefore produce identical entropy values whenever their persistence-length distributions are sufficiently similar. For example, the ellipse and the circle shown in Fig.~\ref{fig:motivation} yield the same persistent entropy, whereas the ellipse and a rotated version of it may yield different entropy values under directional filtrations. These examples illustrate that persistent entropy alone cannot reliably distinguish between geometrically different shapes, while its dependence on the chosen filtration may also hinder invariance to orientation. Consequently, a single scalar entropy value provides only a coarse characterization of the underlying topology, so it should be regarded as a compact global descriptor rather than a complete shape representation, motivating the incorporation of directional information to better capture anisotropic geometric features. The directional limitation of
persistent entropy, its insensitivity to anisotropy and orientation, is precisely the
gap that PET is designed to fill.

\subsection{Directional Topological Transforms}
\label{sec:related:directional}

A parallel line of research constructs topological signatures by evaluating a fixed
descriptor along a continuum of directional filtrations, yielding functional
representations over $S^{d-1}$ that encode the anisotropic geometry of the input shape.

\paragraph{Persistent Homology Transform.}
The PHT~\cite{Turner2014-ws} assigns to each direction $v \in S^{d-1}$ the tuple of
persistence diagrams arising from the directional lower-star filtration $f_v(x) =
\langle x, v \rangle$, one diagram per homology degree. Turner et al \cite{Turner2014-ws}\ proved
that PHT is injective on embedded simplicial complexes in $\mathbb{R}^d$: two distinct
shapes yield distinct PHT signatures, so the transform fully characterizes geometry up
to the ambient embedding. This injectivity makes PHT a gold standard for shape
discrimination, but comes at the cost of storing and comparing one full persistence
diagram per direction---an $O(N n^\omega)$ pipeline where $N$ is the number of
discretized directions and $\omega$ is the matrix multiplication exponent.

\paragraph{Euler Characteristic Transform.}
The ECT~\cite{ghrist2018euler,Curry2022-jl} replaces persistence diagrams with the
integer-valued Euler characteristic function $\chi(\mathcal{K}_t)$, computed at each
filtration parameter $t$ for each direction $v$. Curry et al.~\cite{Curry2022-jl}
proved that ECT achieves injectivity with a \emph{finite} number of directions under
mild geometric conditions on the input complex, a stronger result than that currently
available for PHT. The per-direction output is a scalar function rather than a diagram,
reducing storage and comparison costs relative to PHT, but the representation remains
functional and high-dimensional when the full profile over $S^{d-1}$ is retained.

\section{Persistent Entropy Transform}\label{sec:pet}

The Persistent Entropy Transform  combines persistent entropy with directional filtrations, yielding a compact functional descriptor over $S^{d-1}$.
This section presents the formal development of PET. We give the definition, establish its basic %geometric
properties, 
%prove a quantitative stability theorem, 
and discuss the 
homology
degree and 
discretization choices that arise in practice.

\subsection{Definition, Algorithm and Relation to other Transforms}

We begin by introducing the persistent entropy transform through its formal definition and computational algorithm. We then discuss its relationship with other related transforms.

\begin{defn}[Persistent Entropy Transform]
Let $\mathcal{K}$ be a simplicial complex with vertex set $V\subset\mathbb{R}^d$.
For each direction $v \in S^{d-1}$, let $f_v :V \to \mathbb{R}$ be the directional height function
\[
f_v(x) = \langle x, v \rangle,
\]
and let $\mathrm{D}_k(f_v)$ denote the $k$-dimensional persistence diagram of the lower-star filtration induced by $f_v$.

The Persistent Entropy Transform of $\K$ in homology degree $k$ is the function
\[
\mathrm{PET}^{(k)}_\K : S^{d-1} \to \mathbb{R},
\qquad
v \mapsto \mathrm{PE}\bigl(\mathrm{D}_k(f_v)\bigr).
\]
\end{defn}

Thus, for a fixed homology degree $k$, $\mathrm{PET}^{(k)}_\K$ is a scalar-valued
function whose domain is the space of directions $S^{d-1}$. When the homology degree is fixed or clear from context (in particular, when $k=0$), we write
$\mathrm{PET}_\K$ for brevity and simply PET when the underlying simplicial complex is also fixed or clear from the context.

In practice, PET is evaluated on a set of $N$ directions
$\{v_1,\ldots,v_N\}\subset S^{d-1}$, for some $N\in\mathbb{N}$, yielding the discretized representation
\[
\mathrm{PET}_\K^{(k)} =
\left(
\mathrm{PE}(\mathrm{D}_k(f_{v_1})),
\ldots,
\mathrm{PE}(\mathrm{D}_k(f_{v_N}))
\right)
\in \mathbb{R}^N.
\]
Intuitively, each evaluation $PE(\mathrm{D}_k(f_{v}))$ measures the information-theoretic complexity of the topological structures of $\K$ as seen from direction $v\in S^{d-1}$: a high value indicates that topological features at that viewpoint have broadly distributed lifetimes, while a low value indicates dominance by a single long-lived feature. The full function over $S^{d-1}$ therefore encodes how this complexity varies with orientation, capturing the topology of $\K$ as a compact scalar-valued profile.

The computational procedure for obtaining this discretized representation is summarized in Algorithm~\ref{alg:pet}.
\begin{algorithm}[h!]
\caption{Algorithm 1 Computation of $PET_K^k$}
\label{alg:pet}
\begin{algorithmic}
\Require Simplicial complex \( \K \) with vertex set $V\in\R^d$, directions \( \{v_1, \ldots, v_N\} \subset S^{d-1} \), homology degree \( k \)
\Ensure Vector in $\mathbb{R}^N$
\For{\( j = 1 \) to \( N \)}
\State Compute \( f_{v_j}(x) = \langle x, v_j \rangle \) for all \( x \in V \)
\State Build lower-star filtration of \( \K \) induced by \( f_{v_j} \)
\State Compute \( D_{\text{k}}(f_{v_j}) \) 
\State \( \pe_j \leftarrow \PE(D_{\text{k}}(f_{v_j})) \) (Def.~\ref{de:pe})
\EndFor
\State return \( (\pe_1, \ldots, \pe_N) \)
\end{algorithmic}
\end{algorithm}

For a fixed homology degree $k$, the length of the discretized PET
representation is determined by the number $N$ of sampled directions.
%, rather than by the dimension $d$ of the embedding space orthe number of vertices of $\K$.

If several homology degrees are considered, the corresponding transforms can
be collected as
\[
\left(
\mathrm{PET}^{(0)}_\K,\ldots,\mathrm{PET}^{(d-1)}_\K
\right),
\]
where each component is a function on $S^{d-1}$ (or, after sampling, a vector in
$\mathbb{R}^N$).

\begin{rem}[Relationship to PHT and ECT]
The full PET can be seen as a functional compression of the PHT: where PHT stores the complete diagram $\mathrm{D}_k(f_v)$ for each $(v,k)$, PET retains only its Shannon entropy. Analogously, ECT stores the Euler characteristic $\chi(\K_t^v)$ as a function of $t$ for each $v$, 
where
$
K_t^v$ is the 
sublevel-set complex 
$\{\sigma\in \K:\max_{x\in\sigma} f_v(x)\le t\}
$,
whereas PET integrates this information into a single scalar via entropy. This compression is lossy (PET has not been proven to be injective in general) but yields a descriptor that is a vector in $\mathbb{R}^N$ after discretization of $S^{d-1}$, directly compatible with downstream pipelines.
\end{rem}

\subsection{Basic Geometric Properties}

We establish four properties of PET: boundedness, translation invariance, rotational equivariance, and behavior under uniform scaling. Together, they characterize how PET responds to the most common geometric transformations encountered in shape analysis.

\begin{prop}[Boundedness]
\label{prop:bound}
Let \( n_{v}\) denote the cardinality of
      $D_{k}(f_{v})$ for any \( v \in S^{d-1} \).
 Let 
    $\PET_\K^{(k)}= (\pe_1, \ldots, \pe_N)$ be the output of Alg.\ref{alg:pet}, where $\pe_j=D_{k}(f_{v_j})$ for some 
     \( v_j \in S^{d-1} \).      
    Then,  for all $j$,
\[ 0 \leq \pe_j \leq \log n_v\,. \]
The lower bound is attained when the barcode associated to the diagram $D_{k}(f_{v_j})$ contains a single interval of nonzero length; the upper bound is attained when all intervals of 
the barcode associated to $D_{k}(f_{v_j})$ have equal length.
\end{prop}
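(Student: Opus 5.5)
The plan is to reduce the statement to standard facts about Shannon entropy of a finite probability vector. Fix $j$ and write $D=D_k(f_{v_j})$ with $n_{v_j}$ points. The lower-star filtration of the finite complex $\K$ has finitely many simplices, so $D$ is finite. With the convention in the Remark, the essential class is truncated at $\max f_{v_j}$, so every length $\ell_i$ is finite and nonnegative.

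Intervals with $\ell_i=0$ contribute $p_i=0$. I will adopt the usual convention $0\log 0=0$, which is justified by $\lim_{p\to0^+}p\log p=0$. Such intervals therefore do not change $\PE$. I will also assume $L>0$, meaning at least one interval of nonzero length, so that $\PE$ is defined. Then $(p_i)_{i\in I}$ is a probability vector with $\sum_i p_i=1$ and $0\le p_i\le 1$.

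\textbf{Lower bound.} Each term $-p_i\log p_i$ is nonnegative because $\log p_i\le 0$ for $p_i\in(0,1]$. Hence $\pe_j\ge 0$. Equality holds if and only if every term vanishes, that is, each $p_i\in\{0,1\}$. This happens exactly when one interval has $p_i=1$, i.e. a single interval of nonzero length, which is the stated attainment case.

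\textbf{Upper bound.} I will apply Jensen's inequality to the concave function $\log$, restricted to the indices with $p_i>0$. There are $m\le n_{v_j}$ such indices. Then
\[
\pe_j=\sum_{p_i>0}p_i\log\frac{1}{p_i}\le \log\Bigl(\sum_{p_i>0}p_i\cdot\frac{1}{p_i}\Bigr)=\log m\le \log n_{v_j}.
\]
An alternative route is the Gibbs inequality, $\sum p_i\log(p_i/q_i)\ge 0$ with $q_i=1/n_{v_j}$. For equality: Jensen's inequality is tight for strictly concave $\log$ only when all $1/p_i$ coincide, and the second inequality is tight only when $m=n_{v_j}$. So equality means all intervals have equal (positive) length, in which case $\PE=\log n_{v_j}$.

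\textbf{Expected obstacle.} The main difficulty is not the inequalities but the conventions and the statement itself. The statement writes $\log n_v$ where it means $\log n_{v_j}$, and $\pe_j=D_k(f_{v_j})$ where it means $\pe_j=\PE(D_k(f_{v_j}))$. The proof also needs:
\begin{itemize}
\item zero-length intervals handled via $0\log0=0$, or counted but contributing nothing;
\item $L>0$;
\item the essential-class truncation, so that no length is infinite.
\end{itemize}
I will state these explicitly at the start.
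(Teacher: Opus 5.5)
Your argument is correct and is essentially the paper's proof: the paper simply cites nonnegativity of Shannon entropy and the bound $\log m$ (with equality exactly for the uniform distribution), applied to the normalized lengths $p_i=\ell_i/L$. You unpack those two facts via termwise nonnegativity and Jensen's inequality, and you state explicitly the conventions the paper leaves implicit ($0\log 0=0$, $L>0$, finite truncation of essential classes, zero-length points counted in $n_{v_j}$), as well as the typos $\log n_v$ for $\log n_{v_j}$ and $\pe_j=D_k(f_{v_j})$ for $\pe_j=\PE(D_k(f_{v_j}))$.
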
 

\begin{proof}Both bounds follow from the standard properties of Shannon entropy: \( \pe(p) \geq 0 \) for any probability distribution \( p \), and \( \PE(p) \leq \log m \) for a distribution over \( m \) atoms, with equality if and only if \( p \) is uniform. Applying these bounds to the distribution \( \{p_i\}_{i \in I} \) of normalized persistence lengths (Def.~\ref{de:pe}) gives the result.
\end{proof}

\begin{prop}[Translation invariance]
\label{prop:trans}
    Let $\K$ be a simplicial complex with vertex set $V$.
Let \( a \in \mathbb{R}^d \) and let \( \K + a \) the simplicial complex  with vertex set 
\(V+a=\{v + a : v \in V\} \), denote the translate of \(\K \). Then
\[ \PET_{\K+a} = \PET_\K. \]
\end{prop}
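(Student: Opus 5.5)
The plan is to show that, for each fixed direction $v\in S^{d-1}$, translating the vertex set shifts the height function by a constant. A constant shift translates the persistence diagram rigidly along the diagonal, and persistent entropy depends only on interval lengths. Throughout, $\K+a$ carries the same combinatorial structure as $\K$: a simplex $\sigma=[x_0,\dots,x_q]$ of $\K$ corresponds to $\sigma+a=[x_0+a,\dots,x_q+a]$. Write $\phi:V\to V+a$, $x\mapsto x+a$, for the induced bijection.

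\emph{Step 1 (height functions).} For $x\in V$ we have
\[
f_v(\phi(x))=\langle x+a,v\rangle=f_v(x)+c_v,\qquad c_v:=\langle a,v\rangle .
\]
Since the lower-star value of a simplex is the maximum over its vertices, every simplex $\sigma+a$ receives the value of $\sigma$ plus $c_v$.

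\emph{Step 2 (filtrations and diagrams).} Step 1 gives $(\K+a)^v_{t}=\phi\bigl(\K^v_{t-c_v}\bigr)$ for every $t$. The two sublevel filtrations are therefore isomorphic as filtered simplicial complexes, up to the reparametrization $t\mapsto t+c_v$. The simplicial isomorphism $\phi$ commutes with the inclusions, so it induces an isomorphism of persistence modules after this reindexing. Consequently
\[
\mathrm{D}_k(f_v\circ\phi)=\{(b_i+c_v,\,d_i+c_v)\mid i\in I\},
\]
and this holds with the same index set $I$. This also covers the convention that truncates the essential class at $\max f$, because $\max$ and $\min$ shift by $c_v$ as well.

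\emph{Step 3 (entropy).} The lengths satisfy $\ell_i'=(d_i+c_v)-(b_i+c_v)=\ell_i$. Hence $L'=L$ and $p_i'=p_i$, so Definition~\ref{de:pe} gives $\PE(\mathrm{D}_k(f_v\circ\phi))=\PE(\mathrm{D}_k(f_v))$. Since $v$ was arbitrary, the two functions on $S^{d-1}$ coincide. In particular, the discretized vectors from Algorithm~\ref{alg:pet} agree for any choice of directions and any degree $k$.

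The only point needing care is Step 2: making precise that a shift of the filtration parameter shifts the diagram. This follows from the definition of the diagram via birth and death indices of the filtered complex. An alternative route is the stability inequality~\eqref{eq:bottleneck_stability}, but it gives only closeness, not exact equality up to a shift, so the direct argument is preferable. The remaining steps are routine.
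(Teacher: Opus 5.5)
Your proof is correct and follows the same route as the paper: the height function shifts by the constant $\langle a,v\rangle$, so every birth and death time shifts by that constant, the interval lengths are unchanged, and persistent entropy depends only on the normalized lengths. Your Step 2 (the isomorphism of filtered complexes under the reindexing $t\mapsto t+c_v$, together with the check of the truncation convention) makes explicit a step the paper's proof takes for granted.
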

\begin{proof}The height function on the translated complex satisfies \( f_v(x + a) = \langle x + a, v \rangle = f_v(x) + \langle a, v \rangle \). Hence, all birth and death times in \( D_{k}(f_v) \) are shifted by the constant \( \langle a, v \rangle \), leaving every persistence length \( \ell_i = d_i - b_i \) unchanged. Since PE depends only on the normalized lengths \( \{p_i\} \) (see Def.~\ref{de:pe}), the result follows.
\end{proof}

\begin{prop}[Rotational equivariance]
\label{prop:rot}
Let $\K$ be a simplicial complex with vertex set $V$.
Let    $\PET_\K(v)$ denote the value $\PE(D_k(f_v))$.
Let \( R \in SO(d) \) be a rotation and let \( R \K \) be the simplicial complex with vertex set \(RV=\{R x : x \in V\} \).
Then
\[ \PET_{R \K} (v)= \PET_K(R^\top v) \quad \text{for all} \quad v \in S^{d-1}. \]
\end{prop}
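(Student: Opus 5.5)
The plan is to show that the two filtrations being compared are literally the same filtration of the same abstract complex, transported along the vertex bijection $x \mapsto Rx$. The argument parallels the proof of Proposition~\ref{prop:trans}. There, translation shifted every filtration value by a constant. Here, rotation will not change the filtration values at all, once the direction is rotated back accordingly.

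First, I would note that $R\K$ has the same combinatorial structure as $\K$. The map $\phi: V \to RV$, $x \mapsto Rx$, is a bijection sending the vertices of each simplex $\sigma \in \K$ to those of a simplex $R\sigma \in R\K$. Affine independence and the intersection conditions are preserved because $R$ is a linear isometry. Hence $\phi$ induces a simplicial isomorphism $\K \cong R\K$.

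Second, I would compare height functions. For $v \in S^{d-1}$, the direction $R^\top v$ is again a unit vector, since $R^\top$ is orthogonal. For $y = Rx \in RV$, we have
\[
f_v(Rx) = \langle Rx, v\rangle = \langle x, R^\top v\rangle = f_{R^\top v}(x).
\]
So $f_v \circ \phi = f_{R^\top v}$ on $V$.

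Since lower-star values are maxima of vertex values, each simplex $R\sigma$ receives filtration value $\max_{x\in\sigma} f_{R^\top v}(x)$. This is exactly the value $\sigma$ receives in the lower-star filtration of $\K$ induced by $f_{R^\top v}$. Therefore the sublevel complexes correspond under $\phi$ for every $t$, i.e. $(R\K)^v_t = R(\K^{R^\top v}_t)$.

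Third, I would conclude that the two filtered complexes are isomorphic. Persistent homology is functorial under such isomorphisms, so the persistence modules are isomorphic and $\mathrm{D}_k(f_v \text{ on } R\K) = \mathrm{D}_k(f_{R^\top v} \text{ on } \K)$ as multisets of birth--death pairs. Applying Def.~\ref{de:pe} to identical diagrams gives identical entropies, which is the claim. Nothing in this argument uses $\det R = 1$, so it in fact holds for all of $O(d)$.

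The only step needing any care is the first: $R\K$ must be a genuine geometric simplicial complex isomorphic to $\K$, so that "the same filtration" makes sense. This follows from $R$ being an injective linear isometry, and I expect no real obstacle there. Everything else is the one-line adjoint identity $\langle Rx, v\rangle = \langle x, R^\top v\rangle$.
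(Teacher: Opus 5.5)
Your proof is correct and takes essentially the same route as the paper: the adjoint identity $\langle Rx, v\rangle = \langle x, R^\top v\rangle$ shows that the lower-star filtration of $R\K$ in direction $v$ is the filtration of $\K$ in direction $R^\top v$ carried over by $x\mapsto Rx$, so the diagrams, and hence the entropies, coincide. You also spell out what the paper leaves implicit, namely the simplicial isomorphism $\K\cong R\K$ and the functoriality of persistence, and you correctly note that $\det R=1$ is never used, so the statement holds for all of $O(d)$.
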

\begin{proof}For any vertex \( x \in V \), \( \langle R x, v \rangle = \langle x, R^\top v \rangle = f_{R^\top v}(x) \). The directional filtration of \( R \K \) in direction \( v \) therefore coincides with the directional filtration of \( \K \) in direction \( R^\top v \), giving \( D_{k}(f_{v}) = D_{k}(f_{R^\top v}) \) and hence the stated equality.
\end{proof}

\begin{prop}[Behavior under uniform scaling]
\label{prop:scal}
Let $\K$ be a simplicial complex with vertex set $V$.
Let \( \lambda > 0 \) and let \( \lambda V = \{\lambda x : x \in V\} \). Then
\[ \PET_{\lambda \K} = \PET_\K  \]
\end{prop}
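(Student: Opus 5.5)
The plan mirrors the proofs of Propositions~\ref{prop:trans} and~\ref{prop:rot}: first compute how the height function changes under scaling, then track the effect on the persistence diagram, and finally use the fact that persistent entropy depends only on normalized lengths. Here $\lambda\K$ is the simplicial complex with vertex set $\lambda V$ and the same combinatorial structure, i.e.\ $\sigma=\{x_0,\dots,x_q\}\in\K$ iff $\lambda\sigma=\{\lambda x_0,\dots,\lambda x_q\}\in\lambda\K$. This is well defined since scaling by $\lambda>0$ preserves affine independence.

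\textbf{Step 1 (height functions).} For $x\in V$ and $v\in S^{d-1}$ we have $f_v(\lambda x)=\langle \lambda x,v\rangle=\lambda f_v(x)$.

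\textbf{Step 2 (filtrations).} By definition, the lower-star value of $\lambda\sigma$ is $\max_{x\in\sigma}\lambda f_v(x)=\lambda\max_{x\in\sigma}f_v(x)$, because $\lambda>0$ preserves maxima. Hence the sublevel complex of $\lambda\K$ at level $t$ is the image of the sublevel complex of $\K$ at level $t/\lambda$:
\[
(\lambda\K)^v_t=\lambda\,\K^v_{t/\lambda}.
\]
So the two filtrations consist of the same nested sequence of complexes, with the parameter reparametrized by the increasing map $t\mapsto\lambda t$.

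\textbf{Step 3 (diagrams).} Persistent homology depends only on the nested sequence and the order of its parameter values. Therefore every interval $(b_i,d_i)$ of $\mathrm{D}_k(f_v)$ on $\K$ corresponds bijectively to the interval $(\lambda b_i,\lambda d_i)$ on $\lambda\K$. This includes the essential class truncated at $\max f$, since $\max(\lambda f)=\lambda\max f$. The careful point is this functoriality under an order-preserving reparametrization: the persistence modules are isomorphic after reindexing, so the barcodes correspond. I expect this to be the only step that needs justification beyond computation.

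\textbf{Step 4 (entropy).} Lengths become $\lambda\ell_i$ and the total becomes $\lambda L$, so $p_i=\lambda\ell_i/(\lambda L)=\ell_i/L$ is unchanged. Zero-length intervals stay zero-length, so the index set used in Definition~\ref{de:pe} is the same. Hence $\PE(\mathrm{D}_k(f_v))$ agrees for $\K$ and $\lambda\K$ at every $v$, which gives $\PET_{\lambda\K}=\PET_\K$. This holds both as functions on $S^{d-1}$ and for the discretized vector in $\mathbb{R}^N$ produced by Algorithm~\ref{alg:pet}.

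One remark should accompany the proof: the hypothesis $\lambda>0$ is essential. A negative $\lambda$ reverses the order of the parameter, turning the sublevel filtration into a superlevel one, so the statement would instead hold only up to the reflection $v\mapsto -v$.
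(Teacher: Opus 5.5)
Your proof is correct and follows the same route as the paper. Scaling gives $f_v(\lambda x)=\lambda f_v(x)$, so every birth, death, length and the total $L$ are multiplied by $\lambda$, leaving the weights $p_i$ and hence PE unchanged. Your justification of the order-preserving reparametrization $t\mapsto\lambda t$, together with the bookkeeping for the truncated essential class and the zero-length intervals, makes explicit what the paper's shorter argument takes for granted.
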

\begin{proof}Scaling by \( \lambda \) transforms the height function as \( f_v(\lambda x) = \lambda f_v(x) \), so every birth and death time is multiplied by \( \lambda \). The persistence lengths become \( \ell_i' = \lambda \ell_i \), and the total persistence becomes \( L' = \lambda L \). The normalized weights are therefore \( p_i' = \lambda \ell_i/\lambda L = \ell_i/L = p_i \), leaving  PE invariant.
\end{proof}

By Propositions~\ref{prop:bound}, \ref{prop:scal}, and \ref{prop:trans}, the PET profile is invariant under affine transformations of the space.

\begin{theorem}  
[Isometry invariance]
   For any affine transformation \( \phi:\R^d\to\R^d \),
\[ \PET_{\phi\K} =  \PET_\K \quad \mbox{up to the equivariant reparametrization of \( S^{d-1} \)}. \]
\end{theorem}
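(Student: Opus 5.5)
The plan is to write an arbitrary affine map as $\phi(x)=Ax+a$, with $A\in\R^{d\times d}$ invertible and $a\in\R^d$, and to compose the arguments of Propositions~\ref{prop:trans}, \ref{prop:rot} and \ref{prop:scal} into one identity for the height functions. Invertibility is needed so that $\phi\K$ is again a simplicial complex with the same combinatorial structure: affine independence of vertices is preserved, and simplices are mapped to simplices. It is also needed so that the reparametrization of the sphere is well defined. The first step is therefore to make this hypothesis explicit: for a degenerate $A$ the image need not be a simplicial complex, and the statement cannot hold as written. I will also note that the title ``isometry'' is the special case $A\in O(d)$, but the argument covers every invertible affine map.

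The key computation, for $x\in V$ and $v\in S^{d-1}$, is
\[
f_v(\phi(x))=\langle Ax+a,v\rangle=\langle x,A^\top v\rangle+\langle a,v\rangle
=\|A^\top v\|\,f_{u(v)}(x)+\langle a,v\rangle ,
\qquad u(v)=\frac{A^\top v}{\|A^\top v\|}\in S^{d-1}.
\]
Here $A^\top v\neq 0$ because $A$ is invertible. The map $\Psi_A:S^{d-1}\to S^{d-1}$, $v\mapsto u(v)$, is a homeomorphism with inverse $u\mapsto A^{-\top}u/\|A^{-\top}u\|$. This is the ``equivariant reparametrization'': for $A=R\in SO(d)$ it reduces to $v\mapsto R^\top v$ as in Proposition~\ref{prop:rot}. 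It is equivariant in the sense that $\Psi_{AB}=\Psi_B\circ\Psi_A$.

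Next I use the identification of simplices: $\sigma\in\K$ corresponds to $\phi\sigma\in\phi\K$. Under this identification, the lower-star filtration of $\phi\K$ induced by $f_v$ is the lower-star filtration of $\K$ induced by the function $c\,f_{u}+b$, where $c=\|A^\top v\|>0$ and $b=\langle a,v\rangle$. The reason is that a monotone increasing affine change of the function commutes with taking the maximum over the vertices of a simplex. Hence the filtered complexes are isomorphic after the reindexing $t\mapsto ct+b$. Consequently $\mathrm{D}_k(f_v\ \text{on}\ \phi\K)$ is the image of $\mathrm{D}_k(f_{u}\ \text{on}\ \K)$ under $(b_i,d_i)\mapsto(cb_i+b,\,cd_i+b)$, and every length becomes $\ell_i'=c\,\ell_i$. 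Note that this uses an order-preserving reindexing of the filtration, so the pairing and the zero-length intervals transform consistently.

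Finally, exactly as in the proofs of Propositions~\ref{prop:trans} and \ref{prop:scal}, the normalized weights satisfy $p_i'=c\ell_i/(cL)=p_i$. Therefore $\PET^{(k)}_{\phi\K}(v)=\PET^{(k)}_\K(\Psi_A(v))$ for all $v$, that is, $\PET_{\phi\K}=\PET_\K\circ\Psi_A$, which is the claim.

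The main obstacle is not computational but one of formulation. The statement must be read with $A$ invertible and with ``up to reparametrization'' meaning precomposition by $\Psi_A$. The one point needing care is the step where the rescaling factor $c$ depends on $v$. Scale invariance in Proposition~\ref{prop:scal} is stated for a global scaling, so I will re-derive it pointwise in $v$, which is harmless since PE is evaluated one direction at a time. I will also check the convention of truncating the essential class at $\max f$: under $t\mapsto ct+b$, $\max f$ maps to $c\max f+b$, so its length also scales by $c$.
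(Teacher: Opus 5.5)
Your argument is correct, and it takes a genuinely different route from the paper's, one that is also more general. The paper gives no separate proof. It justifies the theorem with one sentence: PET is affine-invariant ``by'' Propositions~\ref{prop:scal} and \ref{prop:trans} together with Proposition~\ref{prop:bound}, where Proposition~\ref{prop:rot} is presumably meant. In other words, it writes the map as a translation, a rotation and a uniform scaling and applies each proposition in turn.

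That decomposition only reaches orientation-preserving similarities $x\mapsto \lambda Rx+a$ with $R\in SO(d)$ and $\lambda>0$. A shear, or an anisotropic scaling such as $\mathrm{diag}(2,1)$, is not a composition of these. Reflections are also outside Proposition~\ref{prop:rot} as stated. So the paper's composition argument does not justify the theorem at the generality it claims.

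Your single identity
\[
f_v(\phi(x))=\|A^\top v\|\,f_{\Psi_A(v)}(x)+\langle a,v\rangle
\]
is what closes this gap. The key observation is that scale invariance of PE is only needed one direction at a time, so the positive factor $c(v)=\|A^\top v\|$ is allowed to depend on $v$. This handles every invertible affine map at once. It also makes the paper's unspecified ``equivariant reparametrization'' explicit as $\Psi_A(v)=A^\top v/\|A^\top v\|$, which reduces to $v\mapsto R^\top v$ in the orthogonal case.

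The paper's modular route has the advantage of reusing results it has already proved, but it only covers similarities. Your direct computation covers the full statement. Your invertibility caveat is genuinely needed: for singular $A$ and $v\in\ker A^\top$, the function $f_v\circ\phi$ is constant. Then every interval has length zero, $L=0$, and the left-hand side degenerates while $\PET_\K$ generally does not.
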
 

%In shape analysis applications where orientation is not canonically fixed, one therefore compares shapes by comparing their PET profiles up to rotation of the sphere, e.g. via rotationally invariant norms such as \( \| \text{PET}_{K_1} - \text{PET}_{K_2} \circ R \|_2(S^{d-1}) \) minimized over \( R \in SO(d) \). Proposition 4 further shows that PET is invariant under uniform rescaling, so shape comparisons are automatically scale-independent.

We end this section by noting that the robustness guarantees established for entropy-based summaries of persistent homology~\cite{atienza2019persistent,atienza2020stabilitypersistententropy} naturally extend to PET. In particular, they ensure that small perturbations in the input data result in only small changes in the PET profile.

\subsection{Practical Remarks}

\textbf{Choice of homology degree.} In most shape analysis applications, \( k = 0 \) (connected components) and \( k = 1 \) (loops) provide the most discriminative information. In \( \mathbb{R}^2 \), the full PET profile is the pair \( (\PET^{(0)}, \PET^{(1)}) \); in \( \mathbb{R}^3 \), the triple \( (\PET^{(0)}, \PET^{(1)}, \PET^{(2)}) \). For signal analysis applications in \( \mathbb{R}^1 \), only \(\PET^{(0)} \) is nontrivial. 
%Unless stated otherwise, experiments in Section 5 report results for \( k \in \{0, 1\} \) separately and in concatenation.

\textbf{Discretization of \( S^{d-1} \).} In practice, \( S^{d-1} \) is replaced by a finite set of \( N \) directions \( \{v_1, \ldots, v_N\} \), and \( \PET \) is approximated by the vector \( (\PE(D_{k}(f(v_1))), \ldots, \PE(D_{k}(f(v_N)))) \in \mathbb{R}^N \). For \( d = 2 \), uniform angular sampling \( v_j = (\cos(2\pi j/N), \sin(2\pi j/N)) \) is standard. 

For \( d = 3 \), a Fibonacci lattice or a subdivision of the icosahedron provides near-uniform coverage. For \( d \geq 4 \), quasi-random sequences on \( S^{d-1} \) (e.g., randomized projections) could be used. 
The approximation error introduced by discretization is controlled by the continuity of PET as a function of the direction $v$. Since the filtering functions $f_v$ depend continuously on $v$, the stability of persistence diagrams implies that $D_k(f_v)$, and hence $\PET(v)$, vary continuously with $v$. Therefore, sufficiently dense sampling of $S^{d-1}$ yields an accurate approximation of the PET profile.

%The approximation error introduced by discretization is controlled by the modulus of continuity of \( \PET \) as a function of \( v \), which is finite whenever the persistence diagrams \( D_{k}(f_v) \) vary continuously with \( v \)—a condition satisfied generically for embedded simplicial complexes. Sensitivity to \( N \) is analyzed empirically in Section 5.

%Continuity of \( v \rightarrow PET_k(v) \) follows from the fact that \( v \rightarrow f(v) \) is continuous in \( \| \cdot \|_\infty \) (since \( |f(v(x)) - f(v'(x))| = | \langle x, v - v' \rangle | \leq \text{diam}(K) \|v - v'\| \)), combined with the bottleneck stability of Theorem 1.

\textbf{Normalization.} For scale and translation invariant comparisons, one may center the vertex set (subtract the centroid) and normalize by the diameter before computing PET. Proposition \ref{prop:trans} and \ref{prop:scal} guarantee that the resulting descriptor is equivalent to computing PET on the normalized complex directly.e

As an illustration, Fig.~\ref{fig:pet}  shows the persistent entropy values obtained from the persistence diagrams induced by the directional filtrations corresponding to the 64 uniformly sampled orientations shown in Fig \ref{fig:directions}. 
Figs.~\ref{fig:rot} and \ref{fig:trans} illustrate  the translation and rotation  invariance property of the PET profile.

\begin{figure}[ht!]
     \centering
     \includegraphics[width=0.7\textwidth
     ]{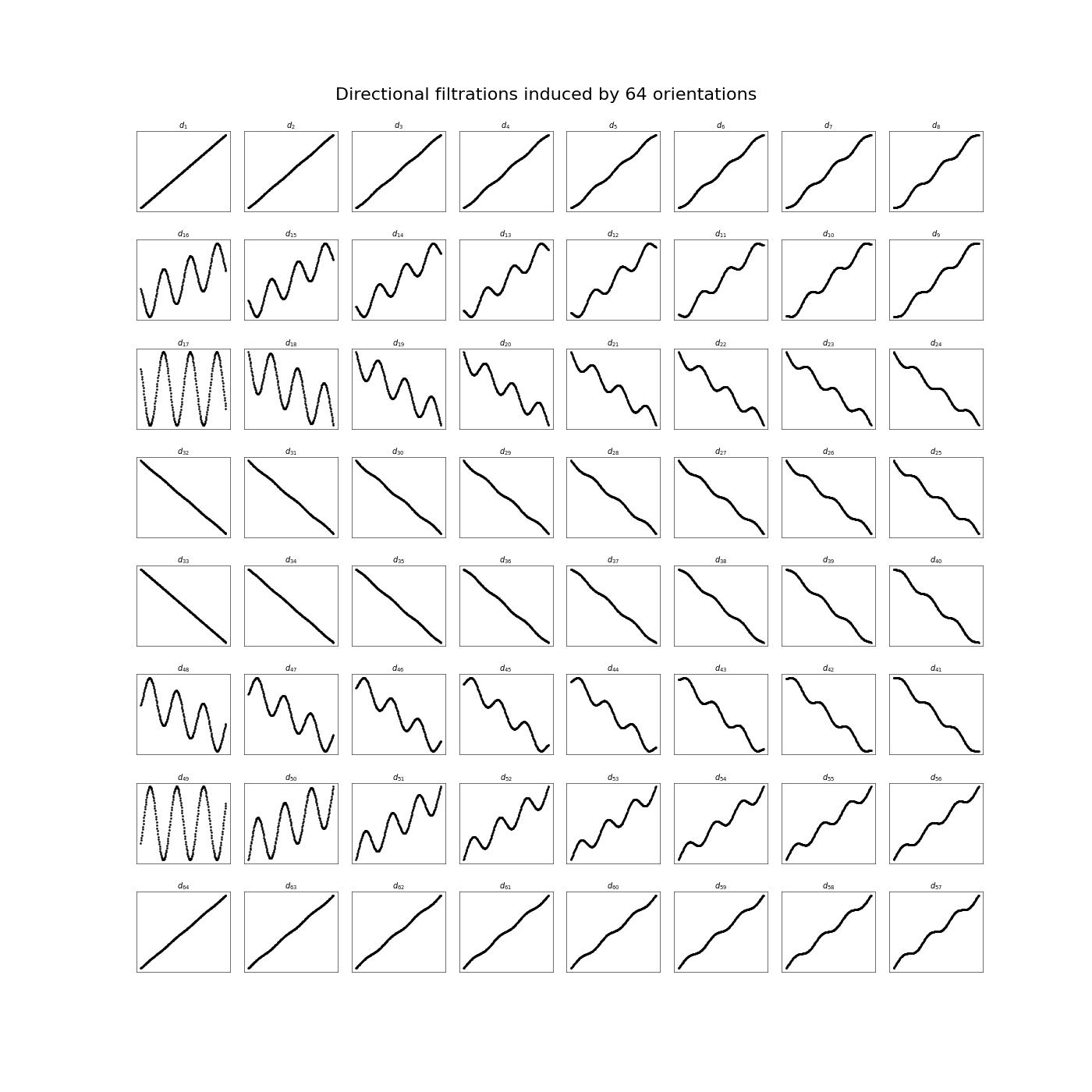}
    \caption{Directional filtrations of the sine function of Fig \ref{fig:entropyCalculation} induced by multiple orientations. Each subplot corresponds to the values of the lower-star filtration obtained from the directional height function associated with a specific direction $v\in S^{d-1}$. We consider 64 uniformly sampled orientations. Each point represents the filtration value of a simplex when it appears in the filtration. } \label{fig:directions}
\end{figure}

\begin{figure}[ht!]
     \centering
     \includegraphics[width=0.6\textwidth
     ]{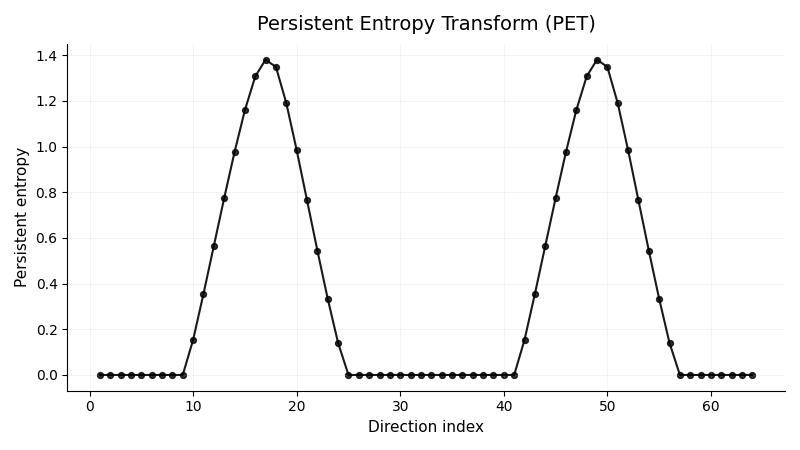}
    \caption{Persistent Entropy Transform (PET): The horizontal axis corresponds to the direction index, while the vertical axis represents the persistent entropy, which quantifies the topological complexity observed from each direction.} \label{fig:pet}
\end{figure}

\begin{figure}[ht!]
     \centering
     \includegraphics[width=\textwidth
     ]{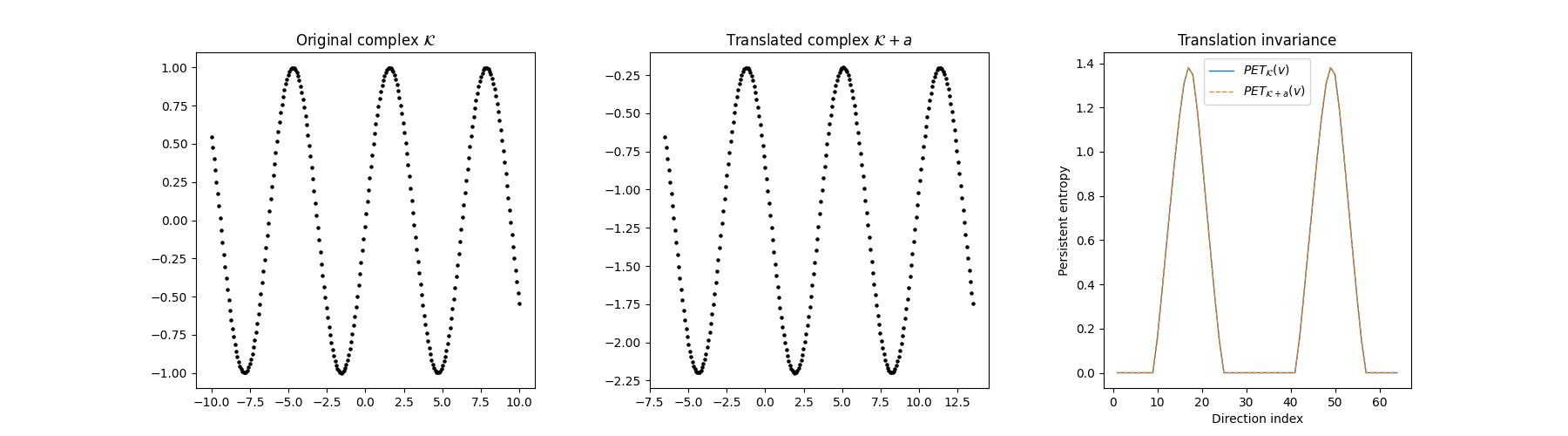}
    \caption{(Left) A point cloud representing a 1D signal and its translated version (Middle). (Right) The PET computed for the original and the translated complex coincide, showing invariance under translations.} \label{fig:trans}
\end{figure}

\begin{figure}[ht!]
     \centering
     \includegraphics[width=\textwidth
     ]{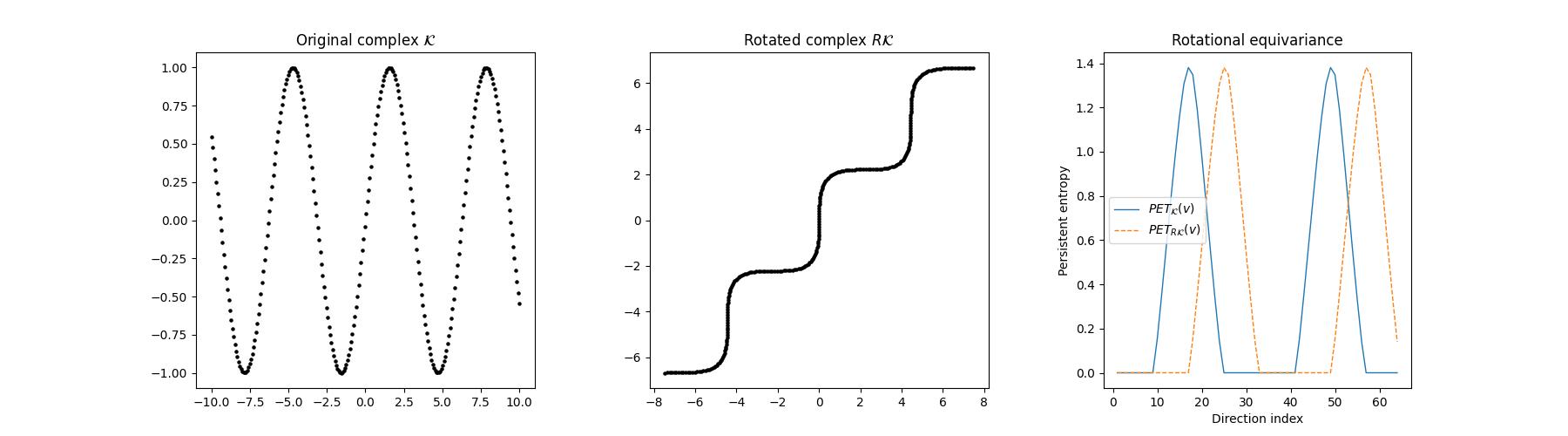}
    \caption{(Left) A point cloud and its rotated version (Middle). (Right) The PET of the rotated complex matches the PET of the original complex, being translated along the x-axis, confirming rotational equivariance.} \label{fig:rot}
\end{figure}

\subsection{Relationship with existing topological descriptors}\label{aubsec:relation}

Standard vectorization methods---persistence landscapes~\cite{JMLR:v16:bubenik15a},
persistence images~\cite{JMLR:v18:16-337}, sliced Wasserstein
kernels~\cite{pmlr-v70-carriere17a}, and complex-vector
summaries~\cite{DBLP:conf/iciap/FabioF15}---embed persistence diagrams into Euclidean
or functional spaces.
Among these, persistent entropy~\cite{chintakunta2015entropybarcode} occupies a
distinctive position due to its minimal dimensionality,
%and $O(n\log n)$ cost,
making it the natural scalar baseline for PET.

Besides, PET shares the directional philosophy of PHT and ECT---evaluating a topological
descriptor along the family of height-function filtrations $\{f_v\}_{v \in S^{d-1}}$---
but replaces the per-direction summary with the scalar persistent entropy $\PE(D_k(f_v))$.
This substitution has three concrete consequences. First, the per-direction output is a
single real number rather than a function or diagram, so the full PET signature is a
vector in $\mathbb{R}^N$ after discretization of $S^{d-1}$, directly compatible with
any downstream statistical or machine learning method. Second, the computational cost
reduces to $O(N n \log n)$, matching ECT and improving on PHT. Third, PET inherits
the robustness of persistent entropy (Section~\ref{sec:pet}).
%rather than the set-valued stability of PHT; this is a weaker property, but one that comes with explicit constants and direct error bounds on the scalar output.

% -----------------------------------------------------------------------
%\subsection{Formal Comparison}
%\label{sec:related:comparison}

% Table~\ref{tab:transforms} (Section~\ref{tab:transforms}) summarizes the four descriptors
% along the axes of output type, stability, injectivity, and computational complexity.
% We expand on two dimensions that are particularly consequential for applications.

%\paragraph{Injectivity.}
Moreover, PHT and ECT are injective under their respective conditions; PET is not known to be
injective in general.
%and we conjecture that it is not. Concretely, two shapes that differ only in the \emph{relative ordering} of their persistence intervals across directions, but whose entropy profiles coincide, would yield identical PET signatures. Constructing an explicit counterexample, or establishing injectivity under additional geometric assumptions, remains an open problem that 
We identify this as a primary direction
for future work (see Section~\ref{sec:conclusions}).

%\paragraph{Compactness and statistical tractability.}
Now, observe that the output of PET after discretization with $N$ directions is a vector
$(\PE(D_k(f_{v_1}), \dots, \PE(D_k(f_{v_N})) \in \mathbb{R}^N$, which can be
used directly as a feature vector in any kernel method, neural network, or statistical
test without additional post-processing. In contrast, PHT requires comparing persistence
diagrams via bottleneck or Wasserstein distances---computationally expensive operations
that complicate statistical inference---and ECT requires comparing scalar functions over
$S^{d-1}$, typically via $L^2$ or $L^\infty$ norms after further discretization. This
difference in downstream tractability is a practical advantage of PET in large-scale or
time-constrained settings, and is confirmed empirically in the timing experiments of
Section~\ref{sec:experiments}.

\medskip
\noindent\textbf{Summary.} PET is not intended to replace PHT or ECT in applications
where full geometric characterization or injectivity is required. Rather, it offers a
principled and computationally efficient alternative when compactness, interpretability,
and direct statistical compatibility are the primary constraints. It extends persistent
entropy from a global scalar descriptor to a directional functional representation,
and extends ECT from the Euler characteristic to an information-theoretic entropy
summary, occupying a well-defined and previously unoccupied position in the landscape
of topological descriptors. PET is related to ECT by analogy rather than generalization: both
evaluate a scalar topological summary at each direction, but ECT
uses the Euler characteristic and PET uses persistent entropy. The
two are complementary in that ECT is injective and PET is more
compact and statistically tractable.

\section{Experiments}\label{sec:experiments}

This section evaluates PET as a compact directional descriptor. 
%The experiments are not designed to establish PET as a universal classifier or as a replacement for injective directional transforms. Instead, 
The experiments test the empirical consequences
of the theory developed above: directional sensitivity, consistency with orthogonal equivariance, behaviour under
indexed perturbations, dependence on directional sampling, and usability as a finite-dimensional representation on
real time-series benchmarks.

\subsection{Experimental protocol}

All experiments use the finite-diagram convention introduced in Section \ref{sec:background}: persistent entropy is computed only from
finite persistence intervals, and empty finite diagrams are assigned entropy zero. PET is computed in homology degree \( k = 0 \), using directional lower-star filtrations on finite embedded one-dimensional simplicial complexes. We focus on \( H_0 \) because it provides a simple and reproducible setting for evaluating the directional behaviour of PET across both synthetic curves and time-series polylines. This choice should be understood as an initial validation setting rather than as a claim that higher-dimensional PET components are unnecessary.

For planar synthetic shapes, each object is represented as a polygonal curve. Vertices are sampled points in \( \mathbb{R}^2 \), and edges connect consecutive vertices. Closed curves are represented by adding an edge between the last and first sampled vertices. The circle, ellipse, and rotated ellipse are centred before PET computation in order to make the geometric comparison independent of translations, consistently with Proposition 2. If scale normalization is applied, it is performed before computing all descriptors and is applied identically to PET and to the non-directional baselines.

For time-series data, each signal \( (s_1, \ldots, s_T) \) is represented as a planar polyline with vertices \( x_i = (t_i, s_i) \in \mathbb{R}^2 \), \( i = 1, \ldots, T \), where the time coordinate \( t_i \) is rescaled to \( [0, 1] \). Consecutive vertices are connected by edges. This representation allows synthetic shapes and time-series signals to be processed using the same directional lower-star framework.

In classification experiments, all preprocessing choices are fitted on the training set only and then applied unchanged to the test set.

For a finite set of sampled directions
$\{v_1, \ldots, v_N\} \subset S^1$, the sampled PET feature vector is defined as
\[
\mathrm{PET}_\K = 
\left(
\PE(D_0(f_{v_1})), \ldots, \PE(D_0(f_{v_N}))
\right)
\in \mathbb{R}^N,
\]
where $\PE(D_0(f_{v_j}))$ is computed from the $0$-dimensional persistence
diagram induced by the lower-star filtration in direction $v_j$.

\begin{table}[h]
\centering
\caption{Summary of the experimental protocol. The placeholders indicate values that must be replaced by the exact settings used in the final reproducible implementation.}
\begin{tabular}{lllll}
\hline
Experiment                    & Representations           & Homology & Directions & Main metric                               \\ \hline
Synthetic shapes              & Polygonal curve           & $H_0$     &      64      & Directional variability                   \\ 
Noise perturbations           & Perturbed polygonal curve & $H_0$     &      64      & PET distance                              \\ 
Sampling analysis             & Polygonal curve           & $H_0$     &      Variable      & Approximation error                       \\ 
TwoLeadECG \& MITBIH Datasets & Planar time-series curve  & $H_0$     &     64       & Classification metrics \\ \hline
\end{tabular}
\label{tab:protocol}
\end{table}

Directions are sampled uniformly on \( S^1 \), unless explicitly stated otherwise. For classification experiments, PET features and baseline vector features are standardized using training-set statistics only. 

We include the following baselines when applicable:

\begin{itemize}
    \item Raw vector representation: original sampled coordinates or time-series values used directly as features.
    \item Persistent entropy: a non-directional scalar baseline obtained by applying persistent entropy to a fixed reference filtration. For synthetic shapes, this baseline is computed from a single lower-star filtration along a fixed direction as a non-directional baseline. Unlike PET, this quantity depends on a single observation direction and therefore may change under rotations of the shape.
    \item Directional PET: the proposed descriptor obtained by evaluating PET over the set $\{v_1, \ldots, v_N\}\subset S^1$.
\end{itemize}

All randomized experiments use fixed random seeds. Perturbation experiments are repeated over multiple independent trials, and results are reported as mean and standard deviation. The persistent homology backend, number of sampled vertices, number of sampled directions, perturbation levels, number of repetitions, and classifier hyperparameters must be specified in the final implementation. Table \ref{tab:protocol} summarizes the experimental settings used in this section.

\subsection{Directional Geometric Sensitivity}

The first experiment evaluates whether PET captures directional variability on simple planar shapes. We consider a circle, an ellipse, and a rotated ellipse. These examples are deliberately simple and are intended to verify that PET reflects the geometric behaviour expected from isotropic and anisotropic objects. For each shape, PET is computed over a uniformly sampled set $\{v_1, \ldots, v_N\}\subset S^1$. Owing to its rotational symmetry, the circle is expected to produce a nearly uniform directional response, whereas the ellipse should exhibit direction-dependent variations. Rotating the ellipse should preserve the overall behaviour of the transform while shifting its directional response.

To quantify directional sensitivity, we compute the empirical directional range
\[
\mathrm{Range}(\K)=
\max_{v_j}
\PE(D_0(f_{v_j}))
%\mathrm{PET}_0(K(v_j))
-
\min_{v_j}
\PE(D_0(f_{v_j}))
%\mathrm{PET}_0(K(v_j)),
\]
and the empirical directional variance
\[
\mathrm{Var}
%_{V_N}
(\K)
=
\frac{1}{N}
\sum_{j=1}^{N}
\left(
\PE(D_0(f_{v_j}))
%\mathrm{PET}_0(K(v_j))
-
\overline{
\PE(D_0(f_{v_j}))
%\mathrm{PET}_0(K)
}
\right)^2,
\]
where
\(
\overline{
\PE(D_0(f_{v_j}))
%\mathrm{PET}_0(K)
}
=
\frac{1}{N}
\sum_{j=1}^{N}
\PE(D_0(f_{v_j}))
%\mathrm{PET}_0(K(v_j))
\,.
\)

We also report the persistent entropy as a non-directional baseline. Unlike PET, this scalar descriptor summarizes the filtration using a single value and therefore cannot explicitly encode directional behaviour.

\begin{table}[h]
\centering
\caption{Directional sensitivity of PET on synthetic shapes. PE is a non-directional scalar baseline. Directional
range and variance quantify the variability of PET over sampled directions.}
\begin{tabular}{lllll}
\hline
Shape                    & PE           & Mean PET & Directional range & Directional variance                               \\ \hline
Cyrcle              & 0.6365           & 0.5163     &     0.6931       & 0.049                   \\ 
Ellipse           &  0.6365 & 0.4678 & 0.6931 & 0.069                                          \\ 
Rotated ellipse             &  0.1989 & 0.4678 & 0.6931 & 0.069                    \\ 
 \hline
\end{tabular}
\label{tab:directional_sensitivity}
\end{table}

Figure \ref{fig:motivation} shows the PET computed for a circle, an ellipse, and a rotated ellipse using directional lower-star filtrations sampled uniformly over $S^1$. The circle produces an approximately constant PET, reflecting its rotational symmetry. In contrast, the ellipse exhibits a periodic PET profile associated with its anisotropic geometry. Rotating the ellipse results in a phase shift of the transform, illustrating the rotational equivariance of PET.

Table~\ref{tab:directional_sensitivity} summarizes the numerical results. The persistent entropy computed from a single direction changes after rotating the ellipse, illustrating its dependence on the chosen filtration direction. In contrast, the mean PET and the directional variance remain unchanged for the ellipse and its rotated version, indicating that PET preserves the overall directional behaviour of the shape while encoding the change as a reparameterization of the directional profile. Moreover, the circle exhibits the smallest directional variance, reflecting its higher degree of rotational symmetry compared with the anisotropic ellipses.

\subsection{Robustness under Noise Perturbations}

The second experiment evaluates how PET changes under controlled indexed vertex perturbations. Starting from a fixed ellipse represented as a polygonal curve, we add independent Gaussian perturbations to the vertex coordinates while preserving the connectivity of the polygon. This perturbation protocol matches the indexed-vertex setting considered previously in Section \ref{sec:pet}. For a noise level $\mu$, the perturbed vertices are generated as
\[
\tilde{x}_i = x_i + \eta_i,
\qquad
\eta_i \sim \mathcal{N}(0,\mu^2 I_2).
\]

For each value of $\mu$, the experiment is repeated $R=50$ times. We compute the PET distance
\[
d_{\PET}(\K,\tilde \K)
=
\left\|
\PET_\K^{(0)}
%\mathrm{PET}_0(K,V_N)
-
\PET_{\tilde{\K}}^{(0)}
%\mathrm{PET}_0(\tilde K,V_N)
\right\|_2,
\]
together with the maximum indexed vertex perturbation
\[
\delta
=
\max_i
\|x_i-\tilde{x}_i\|.
\]

The quantity $\delta$ corresponds to the perturbation scale appearing in the indexed-vertex stability result of Section \ref{sec:pet}. Rather than providing a proof of the stability theorem, this experiment offers an empirical characterization of how PET changes as the perturbation magnitude increases. For each noise level, we report the mean perturbation magnitude, the mean and standard deviation of the PET distance, and the empirical ratio $d_{\mathrm{PET}}/\delta$ averaged over all repetitions.

\begin{table}[h]
\centering
\caption{Robustness of PET under Gaussian indexed vertex perturbations. Results correspond to the mean and standard deviation over $R=50$ independent perturbations for each noise level.}
\label{tab:noise}
\begin{tabular}{ccccc}
\hline
Noise level $\mu$ & Mean $\delta$ & Mean PET distance & Std. PET distance & Mean $d_{\mathrm{PET}}/\delta$ \\
\hline
0 & 0 & 0 & 0 & -\\
0.056 & 0.20 & 28.83 & 0.30 & 144.94 \\
0.11 & 0.41 & 32.06 & 0.22 & 79.45 \\
0.17 & 0.62 & 32.94 & 0.17 & 53.53 \\
0.22 & 0.82 & 33.39 & 0.19 & 41.21 \\
0.28 & 1.03 & 33.63 & 0.18 & 32.99 \\
0.33 & 1.23 & 33.78 & 0.16 & 27.81 \\
0.39 & 1.39 & 33.78 & 0.17 & 24.51 \\
0.44 & 1.64 & 33.86 & 0.16 & 20.87 \\
0.50 & 1.86 & 33.92 & 0.19 & 18.38 \\
\hline
\end{tabular}
\end{table}

Table~\ref{tab:noise} summarizes the results. As expected, the average indexed perturbation $\delta$ increases approximately proportionally to the prescribed noise level. The PET distance increases rapidly for small perturbations and then approaches a plateau around 34, indicating that once the principal geometric features of the shape have been altered, additional perturbations produce comparatively smaller changes in the transform. Moreover, the standard deviation of the PET distance remains below $0.31$ for all noise levels, demonstrating that the measurements are highly consistent across independent perturbations. Finally, the empirical ratio $d_{\mathrm{PET}}/\delta$ decreases monotonically as the perturbation magnitude grows, suggesting that the variation of PET is sublinear with respect to the maximum indexed displacement. Overall, these observations are consistent with the expected robustness of PET under indexed vertex perturbations while highlighting its sensitivity to changes in the underlying geometry.

\begin{figure}[ht!]
     \centering
     \includegraphics[width=0.6\textwidth
     ]{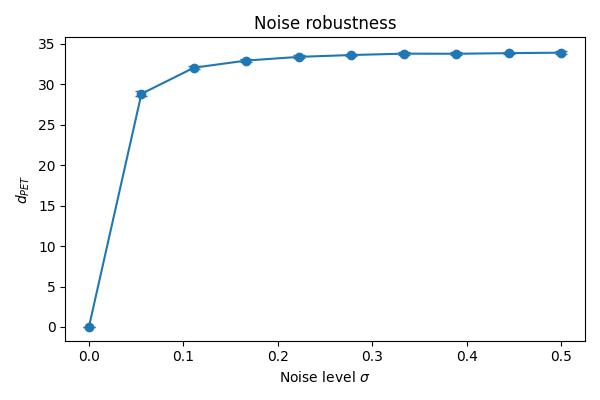}
    \caption{PET distance between the original ellipse and perturbed versions as a function of the perturbation level. The
experiment assesses how PET varies as the indexed perturbation magnitude increases} \label{fig:noiseR}
\end{figure}

\subsection{Directional Sampling Analysis}

The finite PET representation depends on the number of sampled directions. To evaluate this dependence, we compute a high-resolution reference PET using $N_{\mathrm{ref}}=256$ uniformly distributed directions and compare it with PET representations obtained from coarser directional samplings.

Let $V_{N_{\mathrm{ref}}}$ denote the reference direction set and $V_N$ a coarser set. Since the supremum over $S^1$ cannot be evaluated numerically, we approximate it using the reference discretization and define

\[
e_N =
\sup_{v\in V_{N_{\mathrm{ref}}}}
\min_{w\in V_N}
\left|
\PE(D_0(f_v))
%\mathrm{PET}_0(K(v))
-
\PE(D_0(f_w))
%\mathrm{PET}_0(K(w))
\right|.
\]

This discrete approximation mirrors the finite-direction approximation result established in Theorem~2, where the approximation error is controlled by the covering radius of the sampled direction set. For uniformly spaced directions on $S^1$, the Euclidean covering radius is
\[
\rho(V_N)
=
2\sin\left(\frac{\pi}{2N}\right).
\]

Table~\ref{tab:direction_sampling} reports the covering radius and the approximation error for increasing numbers of sampled directions. As expected, both quantities decrease monotonically as the directional sampling becomes denser. In particular, increasing the number of sampled directions from (8) to (128) reduces the approximation error from (0.1993) to (0.0153), representing a reduction of more than one order of magnitude. Moreover, the decrease in approximation error closely follows the reduction of the covering radius, providing empirical support for the approximation behaviour predicted by Theorem~2. These results indicate that relatively dense directional samplings produce PET representations that accurately approximate the high-resolution reference while requiring substantially fewer evaluated directions.

Figure~\ref{fig:conv} illustrates this convergence behaviour. The approximation error decreases steadily as the number of sampled directions increases, confirming that the finite PET representation converges toward the reference transform as the directional discretization is refined.

\begin{table}[h]
\centering
\caption{Directional sampling analysis. Approximation error is measured with respect to a high-resolution PET reference computed with \(N_{\mathrm{ref}}=256\) directions.}
\begin{tabular}{ccc}
\hline
Number of directions \(N\) & Covering radius \(\rho(V_N)\) & Approximation error \(e_N\) \\
\hline
8   & 0.39 & 0.19 \\
16  & 0.19 & 0.11 \\
32  & 0.09& 0.05 \\
64  & 0.05 & 0.03 \\
128 & 0.02 & 0.01 \\
\hline
\end{tabular}
\label{tab:direction_sampling}
\end{table}

\begin{figure}[ht!]
     \centering
     \includegraphics[width=0.6\textwidth
     ]{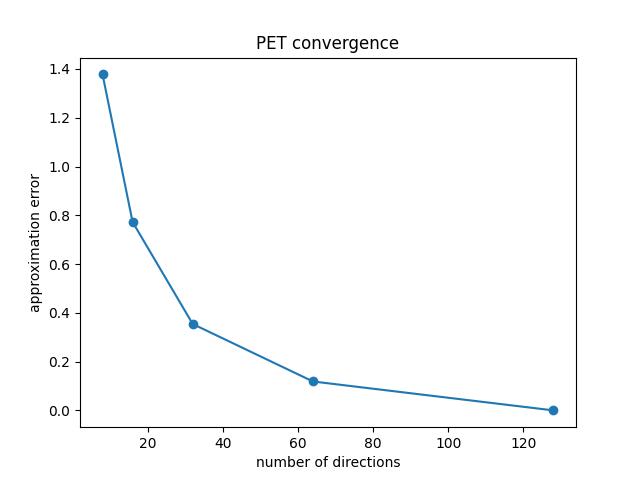}
    \caption{PET convergence} \label{fig:conv}
\end{figure}

\subsection{Classification on Real Biomedical Signals}

To further evaluate the discriminative power of the Persistent Entropy Transform (PET), we consider two electrocardiogram benchmarks: the MIT-BIH Arrhythmia dataset \cite{mitbih} and the TwoLeadECG dataset from the UCR Time Series Archive \cite{UCRArchive2018}. These datasets allow us to assess PET on real time-series benchmarks.

The MIT-BIH Arrhythmia Database is one of the most widely used benchmarks for heartbeat classification. It contains annotated ECG recordings collected from 48 subjects sampled at 360 Hz, where each heartbeat is labeled according to its cardiac rhythm. Following a binary classification setting, each heartbeat is represented by a fixed-length segment containing 400 signal samples. The original dataset exhibits a pronounced class imbalance, which may bias standard classifiers towards the majority class. Therefore, before computing any topological descriptors, the dataset is balanced by randomly undersampling the majority class, resulting in a balanced dataset containing 9\,344 heartbeats (4\,672 samples per class).

According to the UCR benchmark description, TwoLeadECG is a binary univariate ECG dataset consisting of time series of length \(82\), with an official training set of 23 instances and an official test set of 1139 instances. The recordings originate from the MIT-BIH Long-Term ECG Database (record \texttt{ltdb/15814}), and the classification task is to distinguish between two cardiac signal classes.

For both datasets, each ECG signal is represented as a planar curve by mapping the temporal index to the horizontal coordinate and the signal amplitude to the vertical coordinate. Time is normalized to the interval \([0,1]\), while amplitudes are independently normalized for each signal to remove scale differences. PET is then computed by evaluating the persistent entropy associated with lower-star filtrations induced by multiple directions on the embedded curve. Unless otherwise stated, PET is computed using \(64\) uniformly distributed directions and 0-dimensional persistence.

For the TwoLeadECG dataset, we follow the official train/test partition provided by the UCR archive. For the MIT-BIH Arrhythmia dataset, performance is estimated using repeated stratified cross-validation. The objective of these experiments is not to obtain state-of-the-art ECG classification results, but rather to evaluate PET as a compact topological descriptor. In particular, we compare PET against both the original signal representation and the classical (non-directional) persistent entropy, thereby assessing the benefit of incorporating directional topological information.

The following feature representations are evaluated:

\begin{itemize}
    \item \textbf{Raw Data}: the original ECG samples are directly used as input features.
    \item \textbf{Persistent Entropy (PE)}: a single scalar feature corresponding to the classical persistent entropy computed from the signal.
    \item \textbf{Persistent Entropy Transform (PET)}: the proposed directional descriptor, represented by a 64-dimensional feature vector.
\end{itemize}

Each representation is evaluated using three standard classifiers: a linear Support Vector Machine (SVM), Random Forest (RF), and Extreme Gradient Boosting (XGBoost). Performance is reported in terms of Accuracy, F1-score and ROC-AUC together with the dimensionality of the feature representation, since compactness is one of the main design objectives of PET.

\begin{rem}
    The objective of these experiments is not to obtain state-of-the-art ECG classification results, but rather to evaluate PET as a compact topological descriptor. In particular, we compare PET against both the original signal representation and the classical (non-directional) persistent entropy, thereby assessing the benefit of incorporating directional topological information. We do not include comparisons with the Persistent Homology Transform (PHT) or the Euler Characteristic Transform (ECT), as neither provides a directly comparable baseline in this setting. The PHT yields a collection of persistence diagrams indexed by direction rather than a fixed-dimensional feature vector, requiring an additional vectorization step (e.g., persistence images or persistence landscapes), whose choice would significantly influence the final performance. Likewise, the ECT is not naturally formulated for one-dimensional ECG signals represented through lower-star filtrations, making it unsuitable for the framework considered in this work. Our aim is therefore to isolate the contribution of the proposed PET representation as a compact vector embedding derived directly from directional persistent homology.
\end{rem}

Table~\ref{tab:mitbihresults} shows that the raw ECG representation achieves the highest overall performance, with XGBoost obtaining the best results across all evaluated metrics, reaching an accuracy of 98.62\% and an AUC of 99.90\%. However, this performance is achieved using the full 400-dimensional signal, which constitutes the largest feature representation among the evaluated methods.

As in the previous experiments, the classical persistent entropy  representation is unable to capture sufficient discriminative information. By reducing each signal to a single scalar feature,
PE
produces performance close to random guessing for all classifiers, with both accuracy and AUC remaining around 50\%. This confirms that such an aggressive dimensionality reduction removes most of the discriminative topological information present in the ECG signals.

In contrast, the proposed Persistent Entropy Transform (PET) provides a much more informative topological representation. Using only 64 features, PET achieves excellent classification performance across all three classifiers, with Random Forest obtaining the best results (96.28\% accuracy and 99.36\% AUC). Although PET does not outperform the raw 400-dimensional representation, it reduces the feature dimensionality by approximately 84\% while maintaining highly competitive performance. These results indicate that preserving the directional information of persistent entropy captures substantially richer topological characteristics than the classical PE descriptor, yielding an effective and compact representation for ECG classification.

\begin{table}[h]
\centering
\caption{Classification performance on the MIT-BIH dataset using 5-fold stratified cross-validation. Accuracy, F1-score and AUC are reported as mean $\pm$ standard deviation.}
\begin{tabular}{lcccc}
\hline
Method & Dim. & Accuracy & F1-Score & AUC \\
\hline
Raw + SVM & 400 & $0.9115 \pm 0.0052$ & $0.9117 \pm 0.0051$ & $0.9621 \pm 0.0035$ \\
Raw + RF & 400 & $0.9850 \pm 0.0025$ & $0.9849 \pm 0.0026$ & $0.9988 \pm 0.0004$ \\
Raw + XGBoost & 400 & $\mathbf{0.9862 \pm 0.0023}$ & $\mathbf{0.9862 \pm 0.0024}$ & $\mathbf{0.9990 \pm 0.0004}$ \\
\hline
PE + SVM & 1 & $0.4998 \pm 0.0001$ & $0.3999 \pm 0.3265$ & $0.5000 \pm 0.0000$ \\
PE + RF & 1 & $0.5000 \pm 0.0002$ & $0.0000 \pm 0.0000$ & $0.5000 \pm 0.0000$ \\
PE + XGBoost & 1 & $0.5000 \pm 0.0002$ & $0.0000 \pm 0.0000$ & $0.5000 \pm 0.0000$ \\
\hline
PET + SVM & 64 & $0.9516 \pm 0.0045$ & $0.9514 \pm 0.0045$ & $0.9837 \pm 0.0024$ \\
PET + RF & 64 & $\mathbf{0.9628 \pm 0.0040}$ & $\mathbf{0.9630 \pm 0.0040}$ & $\mathbf{0.9936 \pm 0.0012}$ \\
PET + XGBoost & 64 & $0.9625 \pm 0.0042$ & $0.9625 \pm 0.0042$ & $0.9933 \pm 0.0012$ \\
\hline
\end{tabular}
\label{tab:mitbihresults}
\end{table}

Table~\ref{tab:twoleadecgresults} highlights several interesting observations. As expected, the original ECG signal provides the highest classification performance, with the linear SVM achieving an accuracy of 94.21\%. Since the original signal contains only 82 samples, it already constitutes a relatively compact representation for this benchmark.

In contrast, the classical persistent entropy, represented by a single scalar feature, fails to discriminate between the two classes, yielding performance close to random guessing for all three classifiers. This result indicates that reducing the persistence information to a single entropy value removes most of the discriminative topological information contained in the signal.

The proposed PET substantially improves over the classical PE baseline for every classifier. Although PET does not consistently surpass the raw signal representation, it achieves competitive performance while using only 64 features and relying exclusively on topological information extracted from the embedded curve. In particular, Random Forest reaches an accuracy of 93.15\% and an AUC of 97.97\%, demonstrating that the directional entropy representation preserves much more discriminative information than the classical non-directional persistent entropy.

\begin{table}[h]
\centering
\caption{Classification performance on the TwoLeadECG dataset using the official UCR train/test split.  The 'Accuracy', 'F1-Score' and 'AUC' columns display mean and standard deviation values for the specified variables on the test set.}
\begin{tabular}{lccccc}
\hline
Method & Dim. & Accuracy & F1-Score & AUC \\
\hline
Raw + SVM & 82 & $\mathbf{0.9421}$  & $\mathbf{0.9420}$ & $\mathbf{0.9785}$\\
Raw + RF & 82 & 0.7191  & 0.6728 & 0.8424\\
Raw + XGBoost & 82 & 0.7814  & 0.7688 & 0.8671\\
\hline
PE + SVM & 1 & 0.4996  & 0.0000 & 0.5000\\
PE + RF & 1 & 0.4996  & 0.0000 & 0.5000\\
PE + XGBoost & 1 & 0.4996  & 0.0000 & 0.5000\\
\hline
PET + SVM & 64 & 0.8859  & 0.8902 & 0.9335\\
PET + RF & 64 & $\mathbf{0.9315}$  & $\mathbf{0.9293}$ & $\mathbf{0.9797}$\\
PET + XGBoost & 64 & 0.8982  & 0.8991 & 0.9513\\
\hline
\end{tabular}
\label{tab:twoleadecgresults}
\end{table}

These results suggest that the directional information encoded by PET is considerably more informative than the classical persistent entropy, demonstrating that the improvement is due to the directional topological representation rather than to the entropy measure itself.

\section{Conclusions}\label{sec:conclusions}
In this work, we introduced the Persistent Entropy Transform (PET), a novel topological signature combining persistent entropy with directional lower-star filtrations. The proposed framework extends persistent entropy from a global scalar descriptor to a multiscale topological signature capable of encoding geometric variability and anisotropic structural information.

From a theoretical perspective, we formalized the transform and analyzed several of its fundamental properties, including boundedness, translation invariance, rotational equivariance, and stability. These properties are inherited from the stability of persistence diagrams and persistent entropy.

The empirical validation supports the intended role of PET as a compact directional descriptor. Synthetic experiments
assess directional sensitivity, consistency with rotational equivariance, robustness under controlled perturbations, and
dependence on directional sampling density. The TwoLeadECG and MIT-BIH experiments provides a proof of concept that PET
embeddings can be used as compact vector features on a real time-series benchmark.

PET is not intended to replace fully expressive directional transforms such as PHT or ECT when injectivity, reconstruction, or complete geometric characterization is required. Instead, it provides a complementary descriptor for
settings in which compactness, interpretability, and compatibility with standard statistical learning methods are primary
constraints.

\paragraph*{\textbf{Future work}}

Several research directions arise naturally from this work. First, a deeper theoretical analysis of the injectivity and discriminative power of PET remains open. In particular, understanding the extent to which PET characterizes geometric objects is an interesting problem connected with directional topological transforms such as the Persistent Homology Transform.
Future work will also explore applications of PET in machine learning, topological signal processing, and geometric representation learning, as well as its integration with differentiable topological pipelines, as it is a natural candidate for integration into differentiable topological layers for geometric deep learning.

% PET is not known to be injective in general. A natural candidate
% for a counterexample would be a pair of shapes $\mathcal{K}_1,
% \mathcal{K}_2$ such that $PE(\mathrm{D}_k(f_v^{(1)})) =
% PE(\mathrm{D}_k(f_v^{(2)}))$ for all $v \in S^{d-1}$ despite
% $\mathrm{D}_k(f_v^{(1)}) \neq \mathrm{D}_k(f_v^{(2)})$ for some
% $v$; informally, shapes that differ in the relative ordering of
% their persistence intervals across directions but share identical
% entropy profiles. Constructing such a pair explicitly, or proving
% that none exists under geometric regularity conditions, remains
% open.

\paragraph*{\textbf{Code Availability}}
All code for the proposed methodology, as well as for generate the results presented in this manuscript, are publicly available in a Github repository \footnote{\url{https://github.com/victosdur/PET}}.

%\paragraph*{\textbf{Acknowledgements}} 
%We want to thank the reviewers, who gave us useful comments to improve the content of this paper, as well as ideas for future work. 
%This work was partially supported by REXASI-PRO H-EU project, call HORIZON-CL4-2021-HUMAN-01-01, Grant agreement ID: 101070028.

\paragraph*{\textbf{Disclosure of Interest}}
The authors have no competing interests to declare that are relevant to the content of this article.

\paragraph*{\textbf{Acknowledgement}}
This work was partially supported by project PID2025-171542NB-I00, funded by MICIU/AEI/10.13039/501100011033.
% It is now necessary to declare any competing interests or to specifically
% state that the authors have no competing interests. Please place the
% statement with a bold run-in heading in small font size beneath the
% (optional) acknowledgments\footnote{If EquinOCS, our proceedings submission
% system, is used, then the disclaimer can be provided directly in the system.},
% for example: The authors have no competing interests to declare that are
% relevant to the content of this article. Or: Author A has received research
% grants from Company W. Author B has received a speaker honorarium from
% Company X and owns stock in Company Y. Author C is a member of committee Z.
%
% ---- Bibliography ----
%
% BibTeX users should specify bibliography style 'splncs04'.
% References will then be sorted and formatted in the correct style.
%

\bibliographystyle{plain}
\bibliography{biblio}
% \nocite{*}

\end{document}